\newtheorem{theorem}{Theorem}
\numberwithin{equation}{section}
\numberwithin{lemma}{section}
\numberwithin{theorem}{section}
\numberwithin{corollary}{section}
\begin{document}
\title{On the discrete analogues of Appell function $F_3$}
\author{Ravi Dwivedi$^{1,}$\footnote{E-mail: dwivedir999@gmail.com}   \, and Vivek Sahai$^{2,}$\footnote{E-mail: sahai\_vivek@hotmail.com (Corresponding author)} \\ ${}^{1}$Department of Science, SAGEMMC, Jagdalpur, Bastar, CG, 494001, India; \\ ${}^{2}$Department of Mathematics and Astronomy, Lucknow University, \\ Lucknow 226007, India.}
	\maketitle 
\begin{abstract}
The present paper is devoted to the study of Appell hypergeometric function $F_3$ from discrete point of view. We mainly introduce two generalized discrete forms of $F_3$ and study their basic properties \emph{viz.} regions of convergence, difference-differential equations, integral representations, finite and infinite sums, recursion formulae, to name a few. 

\medskip
\noindent \textbf{AMS Subject Classification:} 33C65, 39A12.

\medskip
\noindent \textbf{Keywords:} Appell functions, discrete hypergeometric functions.
\end{abstract}
	\section{Introduction}
	The Appell function $F_3$ and its two degenerations (functions obtained by taking limit) $\varXi_1$ and $\varXi_2$ have been introduced by Appell and Humbert, written fairly in \cite{emo, kdf, sk}, and defined by 
	\begin{align}
		F_3 (a, a', b, b'; c; x, y) & = \sum_{m, n = 0}^{\infty} \frac{(a)_{m} \, (a')_n \, (b)_m  \, (b')_n}{(c)_{m + n}} \, \frac{x^m \, y^n}{m ! \, n!}, \quad \vert x \vert, \, \vert y \vert < 1;
	\end{align}
\begin{align}
	  \varXi_1 \left(a_1, a_2, b_1; c;  x,  y\right)
	  & = \sum_{m,n \geq 0} \frac{(a_1)_{m} \, (a_2)_n \, (b_1)_m}{ (c)_{m+n} \, m! \, n!} \ x^m \, y^n;
\end{align}
and 
\begin{align}
 \varXi_2 \left(a_1, b_1; c;  x,  y\right)
 & = \sum_{m,n \geq 0} \frac{(a_1)_{m} \, (b_1)_m}{(c)_{m+n} \, m! \, n!} \ x^m \, y^n,
\end{align}	
	 respectively. So far, the appearance of these functions in various physical problems increased the interest of mathematicians and motivated them to study these functions from different aspects. %In particular, ... to name a few.
	  In this paper, we explore these functions from discrete point of view. In particular, we give two discrete forms of Appell function $F_3$ and obtain their regions of existence, differential as well as integral properties. 

This paper is third in a series, wherein discrete Appell functions are being examined. The discrete forms of Appell function $F_1$ \cite{ds16} and Appell function $F_2$ \cite{ds17} have been thoroughly studied. Let  $a_1$, $a_2$, $b_1$, $b_2$, $c$, $t$, $t_1$ and $t_2$ be complex number such that $\Re (c) \ne 0, -1, \dots$. Then, for $k$, $k_1$ $k_2 \in \mathbb{N}$, we define the discrete forms of Appell function  ${F}_3$ as follows: 
	\begin{align}
		\mathcal{F}^{(1)}_3 & = \mathcal{F}^{(1)}_3(a_1, a_2, b_1, b_2; c; t_1, t_2, k_1, k_2, x, y)\nonumber\\ 
		& = \sum_{m,n\geq0} \frac{(a_1)_{m} \, (a_2)_n \, (b_1)_m \, (b_2)_n \, (-1)^{m k_1} \, (-t_1)_{mk_1} \, (-1)^{n k_2} \, (-t_2)_{nk_2}}{ (c)_{m+n} \, m! \, n!} \ x^m \, y^n; \label{3.1}
	\end{align} 
	\begin{align}
		\mathcal{F}^{(2)}_3 & =	\mathcal{F}^{(2)}_3(a_1, a_2, b_1, b_2; c; t, k, x, y)\nonumber\\
		& = \sum_{m,n\geq0} \frac{(a_1)_{m} \, (a_2)_n \, (b_1)_m \, (b_2)_n \, (-1)^{(m + n) k} \, (-t)_{(m + n)k}}{(c)_{m+n} \, m! \, n!} \ x^m \, y^n; \label{3.2}
	\end{align} 
where $(-t)_{m \, k}$ is defined by \cite{bc4}
\begin{align}
	(-t)_{m \, k} &  = 	k^{m \, k} \, \prod_{i = 0}^{k - 1} \left(\frac{-t + i}{k}\right)_m.
\end{align}
In the next four sections, we explore the discrete Appell function $\mathcal{F}_3^{(1)}$ and later in the Section~6, we list the results for discrete function $\mathcal{F}_3^{(2)}$. 
%The sectionwise treatment of the paper is as follows:

\section{The discrete Appell function $\mathcal{F}_3^{(1)}$} 
We can see that the special values of $k_1$ and $k_2$ lead $\mathcal{F}_3^{(1)}$ into well known classical functions. In particular, for $k_1 = k_2 = 0$, we have
\begin{align}
	 \mathcal{F}^{(1)}_3(a_1, a_2, b_1, b_2; c; t_1, t_2, 0, 0, x, y)
%	& = \sum_{m, n \geq 0} \frac{(a_1)_{m} \, (a_2)_n \, (b_1)_m \, (b_2)_n}{ (c)_{m+n} \, m! \, n!} \ x^m \, y^n\nonumber\\
	 = F_3(a_1, a_2, b_1, b_2; c; x, y).
\end{align} 
For $k_1 = 1$, $k_2 = 0$, we get a Kamp\'e de F\'eriet  function 
 \begin{align}
	& \mathcal{F}^{(1)}_3(a_1, a_2, b_1, b_2; c; t_1, t_2, 1, 0, x, y)
	%& = \sum_{m, n \geq 0} \frac{(a_1)_{m} \, (a_2)_n \, (b_1)_m \, (b_2)_n \, (-1)^{m} \,  (-t_1)_m}{ (c)_{m + n} \,  m! \, n!} \ x^m \, y^n\nonumber\\
	 = F_{{1}:{0}, {0}} ^{{0}:{3}, {2}}\left(\begin{array}{ccc}
		-: & a_1, b_1, -t_1, & a_2, b_2\\
		c: & - & - 
	\end{array}; -x, \  y\right), 
\end{align} 
 where $ F_{{1}:{0}, {0}} ^{{0}:{3}, {2}}$ is the Kamp\'e de F\'eriet  hypergeometric function;  defined in general by \cite{sk}
  \begin{align}
  	& F_{{l_2}:{l'_2}, {l''_2}} ^{{l_1}:{l'_1}, {l''_1}}\left(\begin{array}{ccc}
  			A: & B, & C\\
  			D: &E, &F 
  		\end{array}; x, \ y\right)\nonumber\\
  	& = %\displaystyle  
  	\sum_{m,n\geq 0} \ \frac{\prod_{i=1}^{l_1} (a_i)_{m+n} \, \prod_{i=1}^{l'_1} (b_i)_{m} \, \prod_{i=1}^{l''_1} (c_i)_{n}}{\prod_{i=1}^{l_2} (d_i)_{m+n}   \, \prod_{i=1}^{l'_2} (e_i)_{m}  \, \prod_{i=1}^{l''_2} (f_i)_{n}} \ \frac{x^m \, y^n}{m! \, n!},\label{c1eq71}
  \end{align}
 where $A$ denote the sequence of complex numbers $a_1, \dots, a_{l_1}$.   Further, for $k_1 = 0$, $k_2 = 1$ and $k_1 = k_2 = 1$, we have the following functions
\begin{align}
& \mathcal{F}^{(1)}_3(a_1, a_2, b_1, b_2; c; t_1, t_2, 0, 1, x, y)
%& = \sum_{m, n \geq 0} \frac{(a_1)_{m} \, (a_2)_n \, (b_1)_m \, (b_2)_n \, (-1)^{n} \,  (-t_2)_n}{ (c)_{m + n} \,  m! \, n!} \ x^m \, y^n\nonumber\\
 = F_{{1}:{0}, {0}} ^{{0}:{2}, {3}}\left(\begin{array}{ccc}
	-: & a_1, b_1,  & a_2, b_2, -t_2\\
	c: & - & - 
\end{array}; x, \  -y\right),
\end{align}
and
\begin{align}
	 \mathcal{F}^{(1)}_3(a_1, a_2, b_1, b_2; c; t_1, t_2, 1, 1, x, y) 
%	& = \sum_{m, n \geq 0} \frac{(a_1)_{m} \, (a_2)_n \, (b_1)_m \, (b_2)_n \, (-1)^{m} \,  (-t_1)_m \, (-1)^{n} \, (-t_2)_n}{ (c)_{m + n} \,  m! \, n!} \ x^m \, y^n\nonumber\\
	 = F_{{1}:{0}, {0}} ^{{0}:{3}, {3}}\left(\begin{array}{ccc}
		-: & a_1, b_1, -t_1, & a_2, b_2, -t_2\\
		c: & - & - 
	\end{array}; -x, \  -y\right),
\end{align}
	respectively. 
	
For convergence of the discrete function $\mathcal{F}^{(1)}_3$, we consider its general term $\mathcal{A}_{m, n} x^m \, y^n$. We have
  \begin{align}
  &	\left \vert \mathcal{A}_{m, n} x^m \, y^n \right \vert\nonumber\\
   & = \left \vert \frac{(a)_{m+n} \, (b_1)_m \, (b_2)_n \, (-t_1)_{mk_1} \, (-t_2)_{nk_2}}{ (c)_{m+n} \, m! \, n!} \ x^m \, y^n \right \vert\nonumber\\
  	& < \left \vert \frac{\Gamma (c)}{\Gamma (a) \, \Gamma (b_1) \, \Gamma (b_2) \, \Gamma (-t_1) \, \Gamma (-t_2)} \right \vert \nonumber\\
  	& \quad \times \left \vert \frac{\Gamma (a + m + n) \, \Gamma (b_1 + m) \, \Gamma (b_2 + n) \, \Gamma (-t_1 + mk_1) \, \Gamma (-t_2 + nk_2)}{\Gamma (c + m + n) \, \Gamma (m + 1) \, \Gamma (n + 1)} \right \vert \, \vert x \vert^m \, \vert y\vert^n.
  \end{align}
The Stirling formula $\lim_{n \to \infty} \Gamma (\lambda + n) = \sqrt (2 \pi) \, n^{\lambda + n - \frac{1}{2}} \, e^{-n}$, for large values of $m$ and $n$ yields
\begin{align}
	&	\left \vert \mathcal{A}_{m, n} x^m \, y^n \right \vert\nonumber\\
	& < \left \vert \frac{\Gamma (c)}{\Gamma (a) \, \Gamma (b_1) \, \Gamma (b_2) \, \Gamma (-t_1) \, \Gamma (-t_2)} \right \vert \nonumber\\
	& \quad \times \left \vert 2 \pi \, (m + n)^{a - c} \, m^{b_1 - 1} \, n^{b_2 - 1} \, (mk_1)^{mk_1 - t_1 - \frac{1}{2}} \, (nk_2)^{nk_2 - t_2 - \frac{1}{2}} e^{- (m k_1 + n k_2)} \right \vert \, \vert x \vert^m \, \vert y\vert^n.
\end{align}
Consider $N > \left \vert \frac{2 \pi \, \Gamma (c)}{\Gamma (a) \, \Gamma (b_1) \, \Gamma (b_2) \, \Gamma (-t_1) \, \Gamma (-t_2)} \right \vert$. Then
\begin{align}
	&	\left \vert \mathcal{A}_{m, n} x^m \, y^n \right \vert < \frac{N \, (mk_1)^{mk_1 - t_1 - \frac{1}{2}} \, (nk_2)^{nk_2 - t_2 - \frac{1}{2}}}{ (m + n)^{c - a} \, m^{1 - b_1} \, n^{1 - b_2} \, e^{m k_1 + n k_2}} \, \vert x \vert^m \, \vert y\vert^n.
\end{align}
For $k \in \mathbb{N}, t_1, t_2 \in \mathbb{C}$ and $\vert x\vert < 1, \vert y \vert < 1$, $\left \vert \mathcal{A}_{m, n} x^m \, y^n \right \vert \to 0$ as $m, n \to \infty$. Hence the discrete function $\mathcal{F}^{(1)}_3$ converges absolutely. 

\subsection{Difference equations}

Let $\Theta_t := t \, \rho_t \, \Delta_t$, where $\Delta_t f(t) = f(t + 1) - f(t)$ and $\rho_t f (t) = f(t - 1)$, be the difference operator. It can be readily verified that
\begin{align}
	\Theta_t \, ((-1)^{nk} \, (-t)_{nk}) & %= n \, k \, t \, (-1)^{nk - 1} \, (-t + 1)_{nk - 1}
	 = n\, k \, (-1)^{nk} \, (-t)_{nk}. 
\end{align} 
We have the following theorem:
\begin{theorem} 
	The discrete Appell function $\mathcal{F}^{(1)}_3$ satisfy the following difference equations:
\begin{align}
	\left[\Theta_{t_1} \left(\frac{1}{k_1} \Theta_{t_1} + \frac{1}{k_2} \Theta_{t_2} + c - 1\right) - k_1 \, (-1)^{k_1} \, (-t_1)_{k_1}\, x \, \rho_{t_1}^{k_1}  \left(\frac{1}{k_1} \Theta_{t_1} + a_1\right) \left(\frac{1}{k_1} \Theta_{t_1}  + b_1\right) \right] \mathcal{F}^{(1)}_3 = 0;\label{1.15}
\end{align}	    
\begin{align}
	\left[\Theta_{t_2}  \left(\frac{1}{k_1} \Theta_{t_1} + \frac{1}{k_2} \Theta_{t_2} + c - 1\right) - k_2 \, (-1)^{k_2}  (-t_2)_{k_2} \, y \, \rho_{t_2}^{k_2}  \left(\frac{1}{k_2} \Theta_{t_2} + a_2\right) \left(\frac{1}{k_2} \Theta_{t_2}  + b_2 \right) \right] \mathcal{F}^{(1)}_3 = 0.\label{1.16}
\end{align}
\end{theorem}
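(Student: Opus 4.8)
The plan is to work termwise on the defining double series \eqref{3.1} and to exploit the fact that $\Theta_{t_1}$ and $\Theta_{t_2}$ act as \emph{index-extracting} operators on the summand, exactly as $x\partial_x$ and $y\partial_y$ do for the ordinary Appell function. Writing the general term as $\mathcal{A}_{m,n}\,x^m y^n$ and noting that the only $t_1$-dependence sits in the factor $(-1)^{mk_1}(-t_1)_{mk_1}$, the identity $\Theta_t((-1)^{nk}(-t)_{nk}) = nk\,(-1)^{nk}(-t)_{nk}$ recorded above gives at once
\begin{align}
\tfrac{1}{k_1}\Theta_{t_1}\bigl(\mathcal{A}_{m,n}x^m y^n\bigr) = m\,\mathcal{A}_{m,n}x^m y^n, \qquad \tfrac{1}{k_2}\Theta_{t_2}\bigl(\mathcal{A}_{m,n}x^m y^n\bigr) = n\,\mathcal{A}_{m,n}x^m y^n.\nonumber
\end{align}
Since these are diagonal operators, the first block $\Theta_{t_1}\bigl(\tfrac{1}{k_1}\Theta_{t_1}+\tfrac{1}{k_2}\Theta_{t_2}+c-1\bigr)$ multiplies the $(m,n)$-term by $m\,k_1\,(m+n+c-1)$, and the factor $\bigl(\tfrac{1}{k_1}\Theta_{t_1}+a_1\bigr)\bigl(\tfrac{1}{k_1}\Theta_{t_1}+b_1\bigr)$ occurring in the second block multiplies it by $(m+a_1)(m+b_1)$.

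The only genuinely new computation is the action of the backward shift $\rho_{t_1}^{k_1}$, which replaces $t_1$ by $t_1-k_1$ in the term. Using the representation $(-t_1)_{mk_1}=\Gamma(-t_1+mk_1)/\Gamma(-t_1)$ implicit in the definition of the generalized Pochhammer symbol, I would establish the shift identity
\begin{align}
\rho_{t_1}^{k_1}\bigl((-1)^{mk_1}(-t_1)_{mk_1}\bigr) = \frac{(-1)^{mk_1}(-t_1)_{(m+1)k_1}}{(-t_1)_{k_1}},\nonumber
\end{align}
so that, together with the extra multiplication by $x$ and the scalar prefactor $k_1(-1)^{k_1}(-t_1)_{k_1}$, the second block sends the $(m,n)$-term to
\begin{align}
k_1\,(m+a_1)(m+b_1)\,\mathcal{B}_{m,n}\,(-1)^{(m+1)k_1}(-t_1)_{(m+1)k_1}\,x^{m+1}y^n,\nonumber
\end{align}
where $\mathcal{B}_{m,n}$ denotes $\mathcal{A}_{m,n}$ stripped of its $(-1)^{mk_1}(-t_1)_{mk_1}$ factor.

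Finally I would reindex this second sum by $m\mapsto m-1$, so that the power of $x$ becomes $x^m$, and use the elementary ratios $\frac{(a_1)_{m-1}}{(a_1)_m}=\frac{1}{a_1+m-1}$, $\frac{(b_1)_{m-1}}{(b_1)_m}=\frac{1}{b_1+m-1}$, $\frac{(c)_{m+n}}{(c)_{m-1+n}}=c+m+n-1$ and $\frac{m!}{(m-1)!}=m$ to rewrite $\mathcal{B}_{m-1,n}$ in terms of the original term $\mathcal{A}_{m,n}$. The factors $(a_1+m-1)$ and $(b_1+m-1)$ then cancel against $(m-1+a_1)(m-1+b_1)$, and what remains is precisely $\sum_{m,n\geq0}m\,k_1\,(m+n+c-1)\,\mathcal{A}_{m,n}x^m y^n$, i.e.\ the contribution of the first block; the $m=0$ term is harmless because of the overall factor $m$. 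Hence the two blocks cancel and \eqref{1.15} follows. Equation \eqref{1.16} is obtained verbatim after interchanging $(t_1,k_1,a_1,b_1,x,m)\leftrightarrow(t_2,k_2,a_2,b_2,y,n)$. I expect the shift identity for $\rho_{t_1}^{k_1}$ acting on $(-t_1)_{mk_1}$, together with the bookkeeping of the sign $(-1)^{(m+1)k_1}$ and the reindexing, to be the main point requiring care; everything else is diagonal and routine.
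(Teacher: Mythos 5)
Your proposal is correct and takes essentially the same route as the paper's proof: both act termwise on the defining series, use the diagonal action $\tfrac{1}{k_1}\Theta_{t_1}\mapsto m$, $\tfrac{1}{k_2}\Theta_{t_2}\mapsto n$, the shift identity $\rho_{t_1}^{k_1}\,(-t_1)_{mk_1}=(-t_1)_{(m+1)k_1}/(-t_1)_{k_1}$, and a reindexing in $m$ to identify the two blocks. The only cosmetic difference is direction: the paper transforms the first block into the second applied to $\mathcal{F}^{(1)}_3$, whereas you evaluate both blocks termwise and verify they coincide.
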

\begin{proof}
To prove equation \eqref{1.15}, we start with 
\begin{align}
	& \Theta_{t_1} \left(\frac{1}{k_1} \Theta_{t_1} +\frac{1}{k_2} \Theta_{t_2} + c - 1\right) \, \mathcal{F}^{(1)}_3\nonumber \\
	& = \sum_{m,n \geq 0} \frac{(a_1)_{m} \, (a_2)_n \, (b_1)_m \, (b_2)_n \, (-1)^{m k_1} \, (-t_1)_{mk_1} \, (-1)^{n k_2} \, (-t_2)_{nk_2}}{ (c)_{m+n} \, m! \, n!} \ x^m \, y^n \, mk_1 \, (c + m + n - 1)\nonumber\\
	%& = k  \sum_{m \ge 1, n \ge 0} \frac{(a_1)_{m} \, (a_2)_n \, (b_1)_m \, (b_2)_n \, (-1)^{(m + n) k} \, (-t_1)_{mk} \, (-t_2)_{nk}}{ (c)_{m + n - 1} \, (m - 1)! \, n!} \ x^m \, y^n  \nonumber\\
%	& = k_1 \, \sum_{m, n \ge 0} \frac{(a_1)_{m + 1} \, (a_2)_n \, (b_1)_{m + 1} \, (b_2)_n \, (-1)^{(m k_1 + n k_2 + k_1) } \, (-t_1)_{(m + 1)k_1} \, (-t_2)_{nk_2}}{ (c)_{m + n} \, m! \, n!} \ x^{m + 1} \, y^n\nonumber\\
%	& = k \, \sum_{m, n \ge 0} (a_1 + m ) \, (b_1 + m) \, (-1)^k \, (-t_1)_k \, x \nonumber\\
%	& \quad \times \frac{(a_1)_{m} \, (a_2)_n \, (b_1)_{m} \, (b_2)_n \, (-1)^{(m + n) k} \, (-t_1 + k)_{m k} \, (-t_2)_{nk}}{ (c)_{m + n} \, m! \, n!} \ x^{m} \, y^n\nonumber\\
	& = k_1 \, \sum_{m, n \ge 0} (a_1 + m) \, (b_1 + m) \, (-1)^{k_1} \, (-t_1)_{k_1} \, x \, \rho_{t_1}^{k_1} \nonumber\\
	& \quad \times \frac{(a_1)_{m} \, (a_2)_n \, (b_1)_{m} \, (b_2)_n \, (-1)^{(m k_1 + n k_2) } \, (-t_1)_{(m + 1)k_1} \, (-t_2)_{nk_2}}{ (c)_{m + n} \, m! \, n!} \ x^{m} \, y^n\nonumber\\
	& = k_1 \, (-1)^{k_1} \, (-t_1)_{k_1} \, x \, \rho_{t_1}^{k_1} \,  \left(\frac{1}{k_1} \Theta_{t_1} + a\right) \, \left(\frac{1}{k_1} \Theta_{t_1}  + b_1\right) \, \mathcal{F}^{(1)}_3.
\end{align}
Thus, we arrive at \eqref{1.15}. Similarly the equation \eqref{1.16} can be proved.  
\end{proof}

\subsection{Integral representations}
In this section, we find the integral representations for discrete Appell  function $\mathcal{F}^{(1)}_3$.
\begin{theorem}\label{t7}
	Let $a_1$, $a_2$, $b_1$, $b_2$, $c$, $t_1$ and $t_2$ be complex numbers. Then for $\vert x\vert < 1$, $\vert y\vert < 1$, the discrete Appell function $\mathcal{F}^{(1)}_3$ can be represented in the integral forms as
	\begin{align}
	& \mathcal{F}^{(1)}_3(a_1, a_2, b_1, b_2; c; t_1, t_2, k_1, k_2, x, y)\nonumber\\
	& =\Gamma \left(\begin{array}{c}
		c\\
		b_1, b_2, c - b_1 - b_2
	\end{array}\right) \iint u^{b_1 - 1} v^{b_2 - 1} (1-u-v)^{c -  b_1 - b_2 - 1} \nonumber\\
& \quad \times {}_1\mathcal{F}_0(a_1; - ; t_1,  k_1, u x) \, {}_1 \mathcal{F}_0(a_2; - ; t_2,  k_2, v y) \, du \, dv,	u\geq 0, \, v\geq 0, \, 1-u-v\geq 0. \label{1.18}
\end{align}
\begin{align}
	& \mathcal{F}^{(1)}_3(a_1, a_2, b_1, b_2; c; t_1, t_2, k_1, k_2, x, y)\nonumber\\
	& = \frac{1}{\Gamma (a_1)} \int_{0}^{\infty} e^{-u} \, u^{a_1 - 1} \nonumber\\
	& \quad \times F_{{1}:{0}; {0}} ^{{0}:{k_1 + 1}; {k_2 + 2}}\left(\begin{array}{ccc}
		- : & b_1, \frac{- t_1}{k_1}, \dots, \frac{- t_1 + k_1 - 1}{k_1} ; & a_2,  b_2, \frac{- t_2}{k_2}, \dots, \frac{- t_2 + k_2 - 1}{k_2}\\
		c: & - ; & - 
	\end{array}; (-k_1)^{k_1} \, u x,  (-k_2)^{k_2} \, y\right) du;\label{3.9}\\
	& = \frac{1}{\Gamma (a_2)} \int_{0}^{\infty} e^{-v} \, v^{a_2 - 1} \nonumber\\
	& \quad \times F_{{1}:{0}; {0}} ^{{0}:{k_1 + 2}; {k_2 + 1}}\left(\begin{array}{ccc}
		- : & a_1, b_1, \frac{- t_1}{k_1}, \dots, \frac{- t_1 + k_1 - 1}{k_1} ; &  b_2, \frac{- t_2}{k_2}, \dots, \frac{- t_2 + k_2 - 1}{k_2}\\
		c: & - ; & - 
	\end{array}; (-k_1)^{k_1} \,  x,  (-k_2)^{k_2} \, v y\right) dv;\label{e3.9}\\
	& = \frac{1}{\Gamma (b_1)} \int_{0}^{\infty} e^{-u} \, u^{b_1 - 1} \nonumber\\
	& \quad \times F_{{1}:{0}; {0}} ^{{0}:{k_1 +1}; {k_2 + 2}}\left(\begin{array}{ccc}
		- : & a_1, \frac{- t_1}{k_1}, \dots, \frac{- t_1 + k_1 - 1}{k_1} ; & a_2, b_2, \frac{- t_2}{k_2}, \dots, \frac{- t_2 + k_2 - 1}{k_2}\\
		c: & - ; & - 
	\end{array}; (-k_1)^{k_1} \, u x,  (-k_2)^{k_2} \, y\right) du;\\
	& = \frac{1}{\Gamma (b_2)} \int_{0}^{\infty} e^{-v} \, v^{b_2 - 1} \nonumber\\
	& \quad \times F_{{1}:{0}; {0}} ^{{1}:{k_1 + 2}; {k_2 + 1}}\left(\begin{array}{ccc}
		- : & a_1, b_1, \frac{- t_1}{k_1}, \dots, \frac{- t_1 + k_1 - 1}{k_1} ; & a_2, \frac{- t_2}{k_2}, \dots, \frac{- t_2 + k_2 - 1}{k_2}\\
		c: & - ; & - 
	\end{array}; (-k_1)^{k_1} \, x,  (-k_2)^{k_2} \, v y\right) dv;\\
& = \frac{1}{\Gamma (-t_1)} \int_{0}^{\infty} e^{-u} \, u^{-t_1 - 1} \nonumber\\
& \quad \times F_{{1}:{0}; {0}} ^{{0}:{2}; {k_2 + 2}}\left(\begin{array}{ccc}
- : & a_1, b_1 ; & a_2, b_2, \frac{- t_2}{k_2}, \dots, \frac{- t_2 + k_2 - 1}{k_2}\\
c: & - ; & - 
\end{array}; (- u)^{k_1} \,  x,  (-k_2)^{k_2} \, y\right) du;\\
& = \frac{1}{\Gamma (-t_2)} \int_{0}^{\infty} e^{-v} \, v^{-t_2 - 1} \nonumber\\
& \quad \times F_{{1}:{0}; {0}} ^{{1}:{k_1 + 2}; {2}}\left(\begin{array}{ccc}
- : & a_1, b_1, \frac{- t_1}{k_1}, \dots, \frac{- t_1 + k_1 - 1}{k_1} ; & a_2, b_2\\
c: & - ; & - 
\end{array}; (-k_1)^{k_1} \, x,  (- v)^{k_2} \, y\right) dv.
\end{align}
\end{theorem}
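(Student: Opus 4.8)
\section*{Proof proposal}

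The plan is to obtain every formula in the theorem by substituting a single classical integral representation of one (or, for \eqref{1.18}, two) of the Pochhammer factors into the defining series \eqref{3.1}, interchanging summation and integration, and then recognising the surviving double series either as a product of two discrete ${}_1\mathcal{F}_0$'s or as a single Kampé de Fériet function. For the first representation \eqref{1.18} I would use the Dirichlet double integral
\begin{align}
\frac{(b_1)_m\,(b_2)_n}{(c)_{m+n}} = \Gamma\left(\begin{array}{c}c\\ b_1,\,b_2,\,c-b_1-b_2\end{array}\right)\iint u^{b_1+m-1}\,v^{b_2+n-1}\,(1-u-v)^{c-b_1-b_2-1}\,du\,dv,\nonumber
\end{align}
valid on the triangle $u,v\geq 0$, $u+v\leq 1$ for $\Re(b_1),\Re(b_2),\Re(c-b_1-b_2)>0$. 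Pulling the integral outside the double sum, the summand splits as $\sum_m(\cdots)(ux)^m\cdot\sum_n(\cdots)(vy)^n$, and by definition each single sum is the discrete ${}_1\mathcal{F}_0$ appearing on the right of \eqref{1.18}.

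For the single-integral forms I would instead use the Eulerian representation $(\lambda)_m=\frac{1}{\Gamma(\lambda)}\int_0^\infty e^{-u}\,u^{\lambda+m-1}\,du$ applied to one factor at a time: to $(a_1)_m$ for \eqref{3.9}, to $(a_2)_n$ for \eqref{e3.9}, and analogously to $(b_1)_m$, $(b_2)_n$, $(-t_1)_{mk_1}$, $(-t_2)_{nk_2}$ for the remaining lines. After interchanging, the key algebraic step is the multiplication identity $(-t)_{mk}=k^{mk}\prod_{i=0}^{k-1}\bigl(\tfrac{-t+i}{k}\bigr)_m$, which gives
\begin{align}
(-1)^{mk_1}\,(-t_1)_{mk_1}\,x^m=\bigl[(-k_1)^{k_1}\,x\bigr]^m\,\prod_{i=0}^{k_1-1}\Bigl(\tfrac{-t_1+i}{k_1}\Bigr)_m.\nonumber
\end{align}
This simultaneously rescales the argument to $(-k_1)^{k_1}x$ (or to $(-k_1)^{k_1}ux$ once the Eulerian factor $u^m$ is absorbed) and supplies the $k_1$ extra numerator parameters $\tfrac{-t_1}{k_1},\dots,\tfrac{-t_1+k_1-1}{k_1}$; the same move on the $n$-sum produces $k_2$ parameters. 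The residual double series is then exactly the Kampé de Fériet function displayed, with the upper parameter counts $k_1+1$ and $k_2+2$ (and their permutations) matching the factors retained.

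The two lines integrated over $-t_1$ or $-t_2$ require a small variant: here it is $(-t_1)_{mk_1}$ itself that is absorbed, so the factor $u^{mk_1}$ produced by the integral combines with $(-1)^{mk_1}$ to give the argument $(-u)^{k_1}x$, and \emph{no} parameters $\tfrac{-t_1+i}{k_1}$ survive. This explains why the corresponding Kampé de Fériet function carries the upper index $2$ rather than $k_1+2$ in those two representations.

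The main obstacle, and the only non-formal point, is the justification of the interchange of the double summation with the integration. I would dispatch this with Tonelli's theorem: replacing each factor by its modulus and integrating the resulting non-negative terms over $u$ (respectively over the triangle) returns, up to the constant $1/\lvert\Gamma(\lambda)\rvert$, a series of exactly the same type as \eqref{3.1} with the relevant parameter replaced by its real part, whose absolute convergence for $\lvert x\rvert,\lvert y\rvert<1$ was established by the Stirling estimate in Section~2. Hence the iterated integral of the absolute value is finite, Tonelli applies, and term-by-term integration is legitimate; evaluating $\int_0^\infty e^{-u}\,u^{\lambda+m-1}\,du=\Gamma(\lambda+m)$ (or the Dirichlet integral for \eqref{1.18}) then collapses each line back onto \eqref{3.1}, confirming every identity.
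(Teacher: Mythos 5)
Your proposal matches the paper's own proof in both structure and substance: the paper establishes \eqref{1.18} via the same Dirichlet double integral for $(b_1)_m\,(b_2)_n/(c)_{m+n}$ and obtains \eqref{3.9} (and, by the same remark, the remaining lines) from the Eulerian integral $\Gamma(a+m)=\int_0^\infty e^{-u}u^{a+m-1}\,du$ applied to one Pochhammer factor at a time. Your explicit use of the multiplication identity to generate the $(-k)^k$ arguments and the extra parameters, and the Tonelli argument for the interchange, only fill in steps the paper leaves implicit.
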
	
\begin{proof}
To prove \eqref{1.18}, we use the integral formula
\begin{align}
	&\frac{(b_1)_m \, (b_2)_n}{(c)_{m + n}}\nonumber\\
	&  = \Gamma \left(\begin{array}{c}
		c\\
		b_1, b_2, c - b_1 - b_2
	\end{array}\right) \iint u^{b_1 + m - 1} v^{b_2 + n - 1} (1-u-v)^{c -  b_1 - b_2 - 1} \, du \, dv,
\end{align}
and as such get
\begin{align}
	& \mathcal{F}^{(1)}_3(a_1, a_2, b_1, b_2; c; t_1, t_2, k_1, k_2, x, y)\nonumber\\
	& =\Gamma \left(\begin{array}{c}
		c\\
		b_1, b_2, c - b_1 - b_2
	\end{array}\right) \sum_{m, n \geq 0} \iint u^{b_1 + m - 1} v^{b_2 + n - 1} (1-u-v)^{c -  b_1 - b_2 - 1} \nonumber\\
& \quad \times \frac{(a_1)_{m} \, (a_2)_n \, (-1)^{m k_1} \, (-t_1)_{mk_1} \, (-1)^{n k_2} \, (-t_2)_{nk_2} }{m! \, n!} \, x^m \, y^n \, du \, dv\nonumber\\
	& =\Gamma \left(\begin{array}{c}
	c\\
	b_1, b_2, c - b_1 - b_2
\end{array}\right) \sum_{m, n \geq 0} \iint u^{b_1 - 1} v^{b_2 - 1} (1-u-v)^{c -  b_1 - b_2 - 1} \nonumber\\
&\quad \times {}_1\mathcal{F}_0(a_1; - ; t_1,  k_1, u x) \, {}_1 \mathcal{F}_0(a_2; - ; t_2,  k_2, v y) du \, dv. 
\end{align}
It completes the proof. To prove \eqref{3.9}, we use the identity $(a)_{m} = \frac{\Gamma (a + m)}{\Gamma (a)}$ and the integral of $\Gamma (a + m)$ as
\begin{align}
	\Gamma (a + m) = \int_{0}^{\infty} e^{-u} \, u^{a + m - 1} \, du.
\end{align}
Similarly, the  remaining integrals can be determined.  
\end{proof}
Among the seven Humbert functions $\phi_1$, $\phi_2$, $\phi_3$, $\psi_1$, $\psi_2$, $\varXi_1$ and $\varXi_2$, the two degenerations $\varXi_1$ and $\varXi_2$ are obtained by taking limits on $F_3$. In parallel, when the limit is applied on $\mathcal{F}^{(1)}_3$, we get the first discrete form of $\varXi_1$ and $\varXi_2$ denoted by $\varXi_1^{(1)}$ and $\varXi_2^{(1)}$. We define the discrete Humbert functions $\varXi_1^{(1)}$ and $\varXi_2^{(1)}$ as:
\begin{align}
	&\varXi^{(1)}_1 \left(a_1, a_2, b_1; c; t_1, t_2, k_1, k_2, x,  y\right) \nonumber\\
	& =	\sum_{m,n \geq 0} \frac{(a_1)_{m} \, (a_2)_n \, (b_1)_m \, (-1)^{m k_1} \, (-t_1)_{mk_1} \, (-1)^{n k_2} \, (-t_2)_{nk_2}}{ (c)_{m+n} \, m! \, n!} \ x^m \, y^n;
\end{align}
\begin{align}
	&\varXi^{(1)}_2 \left(a_1, b_1; c; t_1, t_2, k_1, k_2, x,  y\right)\nonumber\\
	& = 	 \sum_{m,n \geq 0} \frac{(a_1)_{m} \, (b_1)_m \, (-1)^{m k_1} \, (-t_1)_{mk_1} \, (-1)^{n k_2} \, (-t_2)_{nk_2}}{(c)_{m+n} \, m! \, n!} \ x^m \, y^n.
\end{align}
Indeed, it can be verified that
\begin{align}
	& \lim_{\varepsilon \to 0}  \mathcal{F}^{(1)}_3 \left(a_1, a_2, b_1, \frac{1}{\varepsilon}; c; t_1, t_2, k_1, k_2, x, \varepsilon \, y\right) 
	%& = \lim_{\varepsilon \to 0} \sum_{m, n \geq 0} \frac{(a_1)_{m} \, (a_2)_n \, (b_1)_m \, \left(\frac{1}{\varepsilon}\right)_n \, (-1)^{m k_1} \, (-t_1)_{mk_1} \, (-1)^{n k_2} \, (-t_2)_{nk_2}}{(c)_{m+n} \, m! \, n!} \, x^m \, (\varepsilon \, y)^n\nonumber\\
%	& = 	 \sum_{m,n \geq 0} \frac{(a_1)_{m} \, (a_2)_n \, (b_1)_m \, (-1)^{m k_1} \, (-t_1)_{mk_1} \, (-1)^{n k_2} \, (-t_2)_{nk_2}}{(c)_{m+n} \, m! \, n!} \ x^m \, y^n \, \lim_{\varepsilon \to 0} \varepsilon^n \left(\frac{1}{\varepsilon}\right)_n\nonumber\\
	%& = 	 \sum_{m,n \geq 0} \frac{(a_1)_{m} \, (a_2)_n \, (b_1)_m \, (-1)^{(m + n) k} \, (-t_1)_{mk} \, (-t_2)_{nk}}{ (c)_{m+n} \, m! \, n!} \ x^m \, y^n\nonumber\\
	 = \varXi^{(1)}_1 \left(a_1, a_2, b_1; c; t_1, t_2, k_1, k_2, x,  y\right),
\end{align}
\begin{align}
	&\lim_{\varepsilon \to 0}  \mathcal{F}^{(1)}_3 \left(a_1, \frac{1}{\varepsilon}, b_1,\frac{1}{\varepsilon}; c; t_1, t_2, k_1, k_2, x, \varepsilon^2 \, y\right)  = \varXi^{(1)}_2 \left(a_1, b_1; c; t_1, t_2, k_1, k_2, x,  y\right).
\end{align}
\section{Differential and difference formulae}
%Before, we start finding the differential or difference formulae, first we introduce differential and difference operators. 
For difference operator $\Delta_{t}$ defined by $\Delta_{t} f(t) = f(t + 1) - f(t)$, we have $\Delta_{t} [(-1)^k \, (-t)_k] = k \, (-1)^{k - 1} \, (-t)_{k - 1}$. For differential operators $\theta = x \, \frac{\partial}{\partial x}, \phi = y \, \frac{\partial}{\partial y}$, we have the following theorem: 
\begin{theorem}
For $r \in \mathbb{N}$, following difference and differential formulae are satisfied by discrete Appell function $\mathcal{F}^{(1)}_3$:
\begin{align}
&	(\Delta_{t_1})^r \mathcal{F}^{(1)}_3(a_1, a_2, b_1, b_2; c; t_1, t_2, 1, k_2, x, y) \nonumber\\
& = \frac{(a_1)_r \, (b_1)_r \,  x^r }{(c)_r}  \mathcal{F}^{(1)}_3(a_1 + r, a_2, b_1 + r, b_2; c + r; t_1, t_2, 1, k_2, x, y);\label{4.1}\\
& (\Delta_{t_2})^r \mathcal{F}^{(1)}_3(a_1, a_2, b_1, b_2; c; t_1, t_2, k_1, 1, x, y) \nonumber\\
& = \frac{(a_2)_r \, (b_2)_r \,  y^r }{(c)_r}   \mathcal{F}^{(1)}_3(a_1, a_2 + r, b_1, b_2 + r; c + r; t_1, t_2 , k_1, 1, x, y);\label{4.2}\\
%& (\Theta_{t_1})^r \mathcal{F}^{(1)}_3(a_1, a_2, b_1, b_2; c; t_1, t_2, k, x, y) \nonumber\\
%& = \frac{(-1)^{rk} \, (a_1)_r \, (b_1)_r \, (-t_1)_{rk} \, x^r \, k^r}{(c)_r} \, \mathcal{F}^{(1)}_3(a_1 + r, a_2, b_1 + r, b_2; c + r; t_1 - rk, t_2, k, x, y);\\
%& (\Theta_{t_2})^r \mathcal{F}^{(1)}_3(a_1, a_2, b_1, b_2; c; t_1, t_2, k, x, y) \nonumber\\
%& = \frac{(-1)^{rk} \, (a_2)_r \, (b_2)_r \, (-t_2)_{rk} \, y^r \, k^r}{(c)_r} \, \mathcal{F}^{(1)}_3(a_1, a_2 + r, b_1, b_2 + r; c + r; t_1, t_2 - rk, k, x, y);\\
& (\theta)^r \mathcal{F}^{(1)}_3(a_1, a_2, b_1, b_2; c; t_1, t_2, k_1, k_2, x, y) \nonumber\\
& = \frac{(-1)^{rk_1} \, (a_1)_r \, (b_1)_r \, (-t_1)_{rk_1} \, x^r}{(c)_r} \nonumber\\
& \quad \times  \mathcal{F}^{(1)}_3(a_1 + r, a_2, b_1 + r, b_2; c + r; t_1 - rk_1, t_2, k_1, k_2, x, y);\\
& (\phi)^r \mathcal{F}^{(1)}_3(a_1, a_2, b_1, b_2; c; t_1, t_2, k_1, k_2, x, y) \nonumber\\
& = \frac{(-1)^{rk_2} \, (a_2)_r \, (b_2)_r \, (-t_2)_{rk_2} \, y^r}{(c)_r} \nonumber\\
& \quad \times  \mathcal{F}^{(1)}_3(a_1, a_2 + r, b_1, b_2 + r; c + r; t_1, t_2 - rk_2, k_1, k_2, x, y).
\end{align}
\end{theorem}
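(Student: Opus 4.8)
The plan is to prove all four formulae by induction on $r$, reducing each to its single-step ($r=1$) case obtained by acting term-by-term on the defining series of $\mathcal{F}^{(1)}_3$.

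For the two difference formulae \eqref{4.1} and \eqref{4.2}, where $k_1=1$ (resp.\ $k_2=1$), the $t_1$-factor in the general term is the ordinary expression $(-1)^m(-t_1)_m$. First I would apply the identity $\Delta_{t_1}[(-1)^m(-t_1)_m]=m\,(-1)^{m-1}(-t_1)_{m-1}$, stated just before the theorem (with $k=m$), to the summand. The factor $m$ cancels against $m!$ to give $1/(m-1)!$; reindexing $m\mapsto m+1$ and using $(a_1)_{m+1}=a_1(a_1+1)_m$, $(b_1)_{m+1}=b_1(b_1+1)_m$ and $(c)_{m+1+n}=c\,(c+1)_{m+n}$ extracts the prefactor $a_1b_1x/c$ and leaves exactly $\mathcal{F}^{(1)}_3(a_1+1,a_2,b_1+1,b_2;c+1;t_1,t_2,1,k_2,x,y)$, which is \eqref{4.1} for $r=1$. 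The inductive step applies this single-step formula to the parameter-shifted function; the accumulated prefactors $\prod_{j=0}^{r-1}(a_1+j)(b_1+j)x/(c+j)$ collapse to $(a_1)_r(b_1)_r x^r/(c)_r$, yielding \eqref{4.1}. Formula \eqref{4.2} is entirely symmetric in the roles of $(a_2,b_2,t_2,n,y)$.

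For the two differential formulae, with $\theta=x\,\partial/\partial x$ and $\phi=y\,\partial/\partial y$ and general $k_1,k_2$, I would use $\theta(x^m)=m\,x^m$ to bring down the factor $m$, again cancel against $m!$ and reindex $m\mapsto m+1$. The essential new ingredient is the factorization of the generalized symbol that arises: from the product definition $(-t_1)_{(m+1)k_1}=k_1^{(m+1)k_1}\prod_{i=0}^{k_1-1}\big((-t_1+i)/k_1\big)_{m+1}$, applying $(\alpha)_{m+1}=\alpha\,(\alpha+1)_m$ to each factor and recognizing $\big((-t_1+i)/k_1\big)+1=\big(-(t_1-k_1)+i\big)/k_1$ gives the key splitting identity
\begin{align}
(-t_1)_{(m+1)k_1} = (-t_1)_{k_1}\,(-t_1+k_1)_{mk_1},
\end{align}
the powers of $k_1$ cancelling exactly. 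Together with $(-1)^{(m+1)k_1}=(-1)^{k_1}(-1)^{mk_1}$, this shows that a single application of $\theta$ produces the prefactor $(-1)^{k_1}(a_1)_1(b_1)_1(-t_1)_{k_1}x/(c)_1$ multiplying $\mathcal{F}^{(1)}_3$ with $a_1,b_1,c$ each raised by one and $t_1$ lowered to $t_1-k_1$, i.e.\ the $r=1$ case.

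The induction on $r$ then proceeds as before, but now the $t_1$-dependent prefactors must be tracked carefully: the $j$-th step contributes $(-1)^{k_1}\big(-(t_1-jk_1)\big)_{k_1}=(-1)^{k_1}(-t_1+jk_1)_{k_1}$, and I would verify, by iterating the splitting identity above, that $\prod_{j=0}^{r-1}(-t_1+jk_1)_{k_1}=(-t_1)_{rk_1}$, so that the total prefactor collapses to $(-1)^{rk_1}(a_1)_r(b_1)_r(-t_1)_{rk_1}x^r/(c)_r$ and the argument $t_1$ lands at $t_1-rk_1$, as claimed; $\phi$ is handled symmetrically. The main obstacle is precisely this bookkeeping with the generalized Pochhammer symbol $(-t)_{mk}$: establishing the splitting identity with the correct cancellation of the $k_1$-powers, and then confirming that the per-step factors telescope into the single symbol $(-t_1)_{rk_1}$ rather than an unsimplified product.
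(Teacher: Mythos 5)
Your proposal is correct and follows essentially the same route as the paper: term-by-term action of $\Delta_{t_1}$ (resp.\ $\theta$, $\phi$) on the defining series, reindexing $m\mapsto m+1$, extraction of the prefactor, and induction on $r$. The paper only writes out \eqref{4.1} in detail and dismisses the rest as similar, whereas you correctly supply the one nontrivial ingredient it leaves implicit, namely the splitting identity $(-t_1)_{(m+1)k_1}=(-t_1)_{k_1}\,(-t_1+k_1)_{mk_1}$ and its telescoping $\prod_{j=0}^{r-1}(-t_1+jk_1)_{k_1}=(-t_1)_{rk_1}$, both of which check out against the definition of $(-t)_{mk}$.
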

\begin{proof}
 The proofs of these differential formulae are very elementary and hence we give here only the proof of \eqref{4.1}. The rest can be verified in a similar way. The action of difference operator $\Delta_{t_1}$ on the discrete function $\mathcal{F}^{(1)}_3$ yields
 \begin{align}
 &(\Delta_{t_1}) \mathcal{F}^{(1)}_3(a, b_1, b_2; c; t_1, t_2, 1, k_2, x, y)\nonumber\\
& = \sum_{m,n \geq 0} \frac{(a_1)_{m} \, (a_2)_n \, (b_1)_m \, (b_2)_n \, (-1)^{m - 1} \, m \, (-t_1)_{m - 1} \, (-1)^{nk_2} \, (-t_2)_{nk_2}}{ (c)_{m+n} \, m! \, n!} \ x^m \, y^n\nonumber\\
%& =  \sum_{m \ge 1, n \geq 0} \frac{(a_1)_{m} \, (a_2)_n \, (b_1)_m \, (b_2)_n \, (-1)^{(m + n) k - 1} \, (-t_1)_{mk - 1} \, (-t_2)_{nk}}{ (c)_{m+n} \, (m - 1)! \, n!} \ x^m \, y^n\nonumber\\
& = \sum_{m, n \geq 0} \frac{(a_1)_{m + 1} \, (a_2)_n \, (b_1)_{m + 1} \, (b_2)_n \, (-1)^{m}  \, (-t_1)_{m} \, (-1)^{nk_2} \, (-t_2)_{nk_2}}{ (c)_{m+n + 1} \, m! \, n!} \ x^{m + 1} \, y^n\nonumber\\
& =   x \, \frac{a_1 \, b_1}{c}\nonumber\\
& \quad \times \sum_{m,n \geq 0} \frac{(a_1 + 1)_{m} \, (a_2)_n \, (b_1 + 1)_m \, (b_2)_n \, (-1)^{m} \, (-t_1)_{m} \, (-1)^{nk_2} \, (-t_2)_{nk_2}}{ (c)_{m+n} \, m! \, n!} \ x^m \, y^n\nonumber\\
& =   x \, \frac{a_1 \, b_1}{c} \mathcal{F}^{(1)}_3(a_1 + 1, a_2, b_1 + 1, b_2; c + 1; t_1, t_2, 1, k_2, x, y).
 \end{align}
Now, acting the difference operator twice leads to
  \begin{align}
  	&(\Delta_{t_1})^2 \mathcal{F}^{(1)}_3(a_1, a_2, b_1, b_2; c; t_1, t_2, 1, k_2, x, y)\nonumber\\
  	& = x^2 \, \frac{(a_1)_2 \, (b_1)_2}{(c)_2} \, \mathcal{F}^{(1)}_3(a_1 + 2, a_2, b_1 + 2, b_2; c + 2; t_1, t_2, 1, k_2, x, y).
  \end{align}
Inductively, we get
\begin{align}
	&	(\Delta_{t_1})^r \mathcal{F}^{(1)}_3(a_1, a_2, b_1, b_2; c; t_1, t_2, 1, k_2, x, y) \nonumber\\
	& = \frac{(a_1)_r \, (b_1)_r \,  x^r}{(c)_r}   \mathcal{F}^{(1)}_3(a_1 + r, a_2, b_1 + r, b_2; c + r; t_1, t_2, 1, k_2, x, y).
\end{align}
It completes the proof. 
\end{proof}
Some other differential formulas obeyed by $\mathcal{F}^{(1)}_3$ are as follows
\begin{align}
	& \left(\frac{\partial}{\partial x}\right)^r \left[x^{b_1 + r - 1} \mathcal{F}^{(1)}_3(a_1, a_2, b_1, b_2; c; t_1, t_2, k_1, k_2, x, y)\right]\nonumber\\
	& = x^{b_1 - 1} \, (b_1)_r \mathcal{F}^{(1)}_3(a_1, a_2, b_1 + r, b_2; c; t_1, t_2, k_1, k_2, x, y);\label{4.14}\\
		& \left(\frac{\partial}{\partial y}\right)^r [y^{b_2 + r - 1} \mathcal{F}^{(1)}_3(a_1, a_2, b_1, b_2; c; t_1, t_2, k_1, k_2, x, y)]\nonumber\\
	& = y^{b_2 - 1} \, (b_2)_r \mathcal{F}^{(1)}_3(a_1, a_2, b_1, b_2 + r; c; t_1, t_2, k_1, k_2, x, y);\\
		& \left(\frac{\partial}{\partial x}\right)^r [x^{a_1 + r - 1} \mathcal{F}^{(1)}_3(a_1, a_2, b_1, b_2; c; t_1, t_2, k_1, k_2, x, y)]\nonumber\\
	& = x^{a_1 - 1} \, (a_1)_r \mathcal{F}^{(1)}_3(a_1 + r, a_2, b_1, b_2; c; t_1, t_2, k_1, k_2, x, y);\\
		& \left(\frac{\partial}{\partial y}\right)^r [y^{a_2 + r - 1} \mathcal{F}^{(1)}_3(a_1, a_2, b_1, b_2; c; t_1, t_2, k_1, k_2, x, y)]\nonumber\\
	& = y^{a_2 - 1} \, (a_2)_r \mathcal{F}^{(1)}_3(a_1, a_2 + r, b_1, b_2; c; t_1, t_2, k_1, k_2, x, y);\\
		& \left(\frac{\partial}{\partial x}\right)^r [x^{c - 1} \mathcal{F}^{(1)}_3(a, b_1, b_2; c; t_1, t_2, k_1, k_2, x, xy)]\nonumber\\
	& = (-1)^r \, (1 - c)_r \, x^{c - r - 1} \mathcal{F}^{(1)}_3(a_1, a_2, b_1, b_2; c - r; t_1, t_2, k_1, k_2, x, xy);\\
		& \left(\frac{\partial}{\partial y}\right)^r [y^{c - 1} \mathcal{F}^{(1)}_3(a_1, a_2, b_1, b_2; c; t_1, t_2, k_1, k_2, xy, y)]\nonumber\\
	& = (-1)^r \, y^{c - r - 1} \, (1 - c)_r \mathcal{F}^{(1)}_3(a_1, a_2, b_1, b_2; c - r; t_1, t_2, k_1, k_2, xy, y).
\end{align}
Proofs of these formulae can be done using induction.
%\begin{align}
%	& \frac{\partial}{\partial x} \left[x^{b_1} \mathcal{F}^{(1)}_3(a_1, a_2, b_1, b_2; c; t_1, t_2, k, x, y)\right]\nonumber\\
%	& = \sum_{m, n \geq 0} \frac{(a_1)_{m} \, (a_2)_{n} \, (b_1)_m \, (b_2)_n \, (-1)^{(m + n) k} \, (-t_1)_{mk} \, (-t_2)_{nk}}{(c)_{m+n} \, m! \, n!} \, \frac{\partial}{\partial x} x^{b_1 + m} \, y^n\nonumber\\
%	& = \sum_{m, n \geq 0} \frac{(a_1)_{m} \, (a_2)_{n} \, (b_1)_m \, (b_2)_n \, (-1)^{(m + n) k} \, (-t_1)_{mk} \, (-t_2)_{nk}}{ (c)_{m+n} \, m! \, n!} \, (b_1 + m) \, x^{b_1 + m - 1} \, y^n\nonumber\\
%	& = x^{b_1 - 1} \, b_1 \sum_{m, n \geq 0} \frac{(a_1)_{m} \, (a_2)_n \, (b_1 + 1)_m \, (b_2)_n \, (-1)^{(m + n) k} \, (-t_1)_{mk} \, (-t_2)_{nk}}{ (c)_{m+n} \, m! \, n!} \,  x^{m} \, y^n\nonumber\\
%	& = x^{b_1 - 1} \, b_1 \, \mathcal{F}^{(1)}_3(a_1, a_2, b_1 + 1, b_2; c; t_1, t_2, k, x, y). 
%\end{align}
%Performing the same, we get
%\begin{align}
%	& \left(\frac{\partial}{\partial x}\right)^2 \left[x^{b_1 + 1} \, \mathcal{F}^{(1)}_3(a_1, a_2, b_1, b_2; c; t_1, t_2, k, x, y)\right]\nonumber\\
%	& = x^{b_1 - 1} \, (b_1)_2 \, \mathcal{F}^{(1)}_3(a_1, a_2, b_1 + 2, b_2; c; t_1, t_2, k, x, y). 
%\end{align}
%Which can be generalized as
%\begin{align}
%	& \left(\frac{\partial}{\partial x}\right)^r \left[x^{b_1 + r - 1} \, \mathcal{F}^{(1)}_3(a_1, a_2, b_1, b_2; c; t_1, t_2, k, x, y)\right]\nonumber\\
%	& = x^{b_1 - 1} \, (b_1)_r \, \mathcal{F}^{(1)}_3(a_1, a_2, b_1 + r, b_2; c; t_1, t_2, k, x, y).
%\end{align}
%\end{proof}
\section{Finite and infinite sum}
Now, we establish some finite and infinite summation formulae in terms of discrete Appell function $\mathcal{F}^{(1)}_3$ in the following theorem.
\begin{theorem} 
	Following summation formulas hold true
	\begin{align}
& \mathcal{F}^{(1)}_3(a_1, a_2, b_1 + r, b_2; c; t_1, t_2, k_1, k_2, x, y)\nonumber\\
& = \sum_{s = 0}^{r} {r \choose s} \frac{(a_1)_s \, (-1)^{sk_1} \, (-t_1)_{sk_1}}{(c)_s} \, x^s \, \mathcal{F}^{(1)}_3(a_1 + s, a_2, b_1 + s, b_2; c + s; t_1 - sk_1, t_2, k_1, k_2, x, y);\label{5.1}\\
& \mathcal{F}^{(1)}_3(a_1, a_2, b_1, b_2 + r; c; t_1, t_2, k_1, k_2, x, y)\nonumber\\
& = \sum_{s = 0}^{r} {r \choose s} \frac{(a_2)_s \, (-1)^{sk_2} \, (-t_2)_{sk_2}}{(c)_s} \, y^s \, \mathcal{F}^{(1)}_3(a_1, a_2 + s, b_1, b_2 + s; c + s; t_1, t_2 - sk_2, k_1, k_2, x, y);\label{5.2}\\
	&\sum_{r = 0}^{\infty} \frac{(a_1)_r}{r !} \, z^r \, \mathcal{F}^{(1)}_3 (a_1 + r, a_2, b_1, b_2; c ; t_1, t_2, k, x, y)\nonumber\\
	& = (1 - z)^{-a_1} \, \mathcal{F}^{(1)}_3 \left(a_1, a_2, b_1, b_2; c; t_1, t_2, k, \frac{x}{1 - z}, y\right). \label{5.3}
	\end{align}
\end{theorem}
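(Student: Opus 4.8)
The plan is to establish all three identities by substituting the series definition \eqref{3.1} of $\mathcal{F}^{(1)}_3$ into both sides and rearranging; every interchange of summation below is legitimate because of the absolute convergence established in Section~2. For the finite sum \eqref{5.1} I would start from the right-hand side and expand each inner $\mathcal{F}^{(1)}_3(a_1+s,a_2,b_1+s,b_2;c+s;t_1-sk_1,t_2,k_1,k_2,x,y)$ as a double series in $m,n$. The prefactor $\binom{r}{s}(a_1)_s(-1)^{sk_1}(-t_1)_{sk_1}x^s/(c)_s$ then merges into the summand through the elementary Pochhammer splittings
\begin{align}
(a_1)_s\,(a_1+s)_m=(a_1)_{m+s},\quad (c)_s\,(c+s)_{m+n}=(c)_{m+n+s},\quad (-t_1)_{sk_1}(-t_1+sk_1)_{mk_1}=(-t_1)_{(m+s)k_1},\nonumber
\end{align}
together with $(-1)^{sk_1}(-1)^{mk_1}=(-1)^{(m+s)k_1}$ and $x^s x^m=x^{m+s}$. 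After this collapse I would set $M=m+s$ and interchange the $s$- and $M$-summations, so that for fixed $M$ the index $s$ runs over $0\le s\le\min(r,M)$.

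The heart of the matter is the resulting coefficient identity. Writing $(b_1+s)_{M-s}=(b_1)_M/(b_1)_s$, $M!/(M-s)!=(-1)^s(-M)_s$ and $\binom{r}{s}=(-1)^s(-r)_s/s!$, the inner $s$-sum reduces to
\begin{align}
\sum_{s=0}^{\min(r,M)}\binom{r}{s}\frac{M!}{(M-s)!}\,(b_1+s)_{M-s}=(b_1)_M\,{}_2F_1(-r,-M;b_1;1)=(b_1)_M\,\frac{(b_1+M)_r}{(b_1)_r}=(b_1+r)_M,\nonumber
\end{align}
where the middle step is the terminating Chu--Vandermonde summation and the last uses $(b_1)_M(b_1+M)_r=(b_1)_{M+r}$. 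Substituting $(b_1+r)_M$ back into the $M,n$ double series reconstitutes exactly the series \eqref{3.1} with $b_1$ replaced by $b_1+r$, i.e. the left-hand side of \eqref{5.1}. The identity \eqref{5.2} follows verbatim with the roles of $(a_1,b_1,t_1,k_1,x)$ and $(a_2,b_2,t_2,k_2,y)$ interchanged, expanding $(b_2+r)_N$ instead.

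For the infinite sum \eqref{5.3} I would substitute the series for $\mathcal{F}^{(1)}_3(a_1+r,\dots)$, use $(a_1)_r(a_1+r)_m=(a_1)_{m+r}$ to absorb the $r$-prefactor into the summand, and reorder the (absolutely convergent, for $|z|<1$ and $|x/(1-z)|<1$) triple series so as to perform the $r$-summation first. With $(a_1)_{m+r}=(a_1)_m(a_1+m)_r$, the inner sum is evaluated by the generalized binomial theorem $\sum_{r\ge0}(\alpha)_r z^r/r!=(1-z)^{-\alpha}$ with $\alpha=a_1+m$, producing the factor $(a_1)_m(1-z)^{-(a_1+m)}$. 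Pulling out $(1-z)^{-a_1}$ and absorbing $(1-z)^{-m}$ into $x^m$ as $(x/(1-z))^m$ leaves precisely $(1-z)^{-a_1}\,\mathcal{F}^{(1)}_3(a_1,a_2,b_1,b_2;c;t_1,t_2,k_1,k_2,x/(1-z),y)$.

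I expect the main obstacle to be the finite-sum coefficient identity underlying \eqref{5.1}--\eqref{5.2}: one must correctly collapse the five Pochhammer and sign factors before reindexing, and then recognize the remaining $s$-sum as a terminating $_2F_1(-r,-M;b_1;1)$ summable by Chu--Vandermonde. The convergence bookkeeping for \eqref{5.3} is routine given Section~2, the only care being to record the region $|z|<1$, $|x/(1-z)|<1$ in which the rearrangement and the binomial expansion are valid.
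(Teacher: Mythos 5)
Your proof is correct, but for the finite sums \eqref{5.1}--\eqref{5.2} it takes a genuinely different route from the paper. The paper applies the Leibniz rule to $\left(\partial/\partial x\right)^{r}\bigl[x^{b_1+r-1}\mathcal{F}^{(1)}_3\bigr]$, evaluates each factor by the differential formulas already proved in Section~3, and then compares with the closed form \eqref{4.14} of the same expression; cancelling $x^{b_1-1}(b_1)_r$ gives \eqref{5.1}. You instead manipulate the double series directly: after merging the Pochhammer and sign factors and reindexing $M=m+s$, you reduce the claim to the coefficient identity $\sum_{s}\binom{r}{s}\frac{M!}{(M-s)!}(b_1+s)_{M-s}=(b_1+r)_M$ and settle it by the terminating Chu--Vandermonde theorem. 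All the steps check out, including the splitting $(-t_1)_{sk_1}(-t_1+sk_1)_{mk_1}=(-t_1)_{(m+s)k_1}$ (valid because the paper's $(-t)_{mk}$ coincides with the ordinary shifted factorial) and the evaluation ${}_2F_1(-r,-M;b_1;1)=(b_1+M)_r/(b_1)_r$. Your route is self-contained and exposes the combinatorial identity underlying the formula, at the price of invoking a classical summation theorem; the paper's route is shorter but rests on the machinery of Section~3. For the infinite sum \eqref{5.3} your argument is essentially the paper's --- binomial theorem plus $(a_1)_{m+r}=(a_1)_m(a_1+m)_r=(a_1)_r(a_1+r)_m$ --- merely run from the left-hand side to the right instead of the reverse, with the convergence region $|z|<1$, $|x/(1-z)|<1$ correctly noted.
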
  
\begin{proof}
To prove \eqref{5.1}, we start by applying the Leibnitz rule and get
\begin{align}
	& \left(\frac{\partial}{\partial x}\right)^r \left[x^{b_1 + r - 1} \, \mathcal{F}^{(1)}_3(a_1, a_2, b_1, b_2; c; t_1, t_2, k_1, k_2, x, y)\right]\nonumber\\
	& = \sum_{s = 0}^{r} {r \choose s} \left[\left(\frac{\partial}{\partial x}\right)^{r - s} \, x^{b_1 + r - 1}\right] \, \left[\left(\frac{\partial}{\partial x}\right)^s  \, \mathcal{F}^{(1)}_3(a_1, a_2, b_1, b_2; c; t_1, t_2, k_1, k_2, x, y)\right]\nonumber\\
	& = \sum_{s = 0}^{r} {r \choose s} (-1)^{r - s} \, (1 - b_1 + s)_{r - s} \, x^{b_1 + s - 1} \, \frac{(a_1)_s \, (b_1)_s \, (-1)^{sk_1} \, (-t_1)_{sk_1}}{(c)_s}\nonumber\\
	& \quad \times \mathcal{F}^{(1)}_3(a_1 + s, a_2, b_1 + s, b_2; c + s; t_1 - sk_1, t_2, k_1, k_2, x, y) \nonumber\\
	& = \sum_{s = 0}^{r} {r \choose s} \, (-1)^{r - s} \frac{(-1)^s \, (1 - b_1 - r)_r}{(b_1)_s} \, x^{b_1 + s - 1} \, \frac{(a_1)_s \, (b_1)_s \, (-1)^{sk_1} \, (-t_1)_{sk_1}}{(c)_s}\nonumber\\
	& \quad \times \mathcal{F}^{(1)}_3(a_1 + s, a_2, b_1 + s, b_2; c + s; t_1 - sk_1, t_2, k_1, k_2, x, y) \nonumber\\
	& = (b_1)_r \, \sum_{s = 0}^{r} {r \choose s} \,  x^{b_1 + s - 1} \, \frac{(a_1)_s  \, (-1)^{sk_1} \, (-t_1)_{sk_1}}{(c)_s}\nonumber\\
	& \quad \times \mathcal{F}^{(1)}_1(a_1 + s, a_2, b_1 + s, b_2; c + s; t_1 - sk_1, t_2, k_1, k_2, x, y).
\end{align}
Taking into account the equation \eqref{4.14}, we have
\begin{align}
&	x^{b_1 - 1} \, (b_1)_r \, \mathcal{F}^{(1)}_3(a_1, a_2, b_1 + r, b_2; c; t_1, t_2, k_1, k_2, x, y)\nonumber\\
	& = (b_1)_r \, \sum_{s = 0}^{r} {r \choose s} \,  x^{b_1 + s - 1} \, \frac{(a_1)_s  \, (-1)^{sk_1} \, (-t_1)_{sk_1}}{(c)_s}\nonumber\\
	& \quad \times \mathcal{F}^{(1)}_3(a_1, a_2 + s, b_1 + s, b_2; c + s; t_1 - sk_1, t_2, k_1, k_2, x, y).
\end{align}
This implies
	\begin{align}
	& \mathcal{F}^{(1)}_3 (a_1, a_2, b_1 + r, b_2; c; t_1, t_2, k_1, k_2, x, y)\nonumber\\
	& = \sum_{s = 0}^{r} {r \choose s} \frac{(a_1)_s \, (-1)^{sk_1} \, (-t_1)_{sk_1}}{(c)_s} \, x^s \, \mathcal{F}^{(1)}_3(a_1 + s, a_2, b_1 + s, b_2; c + s; t_1 - sk_1, t_2, k_1, k_2, x, y).
\end{align}
This completes the proof. A similar procedure can be used to get equation \eqref{5.2}. To prove \eqref{5.3}, we start with the right hand side of the equation and thus get
\begin{align}
	&(1 - z)^{-a_1} \, \mathcal{F}^{(1)}_3 \left(a_1, a_2, b_1, b_2; c; t_1, t_2, k_1, k_2, \frac{x}{1 - z}, y\right) \nonumber\\
	& = \sum_{m,n \geq 0} (1 - z)^{- (a_1 + m)} \, \frac{(a_1)_{m} \, (a_2)_n \, (b_1)_m \, (b_2)_n \, (-1)^{m k_1} \, (-t_1)_{mk_1} \, (-1)^{nk_2} (-t_2)_{nk_2}}{ (c)_{m+n} \, m! \, n!} \ x^m \, y^n.
\end{align}
Using the binomial theorem $(1 - z)^{- (a_1 + m)} = \sum_{r = 0}^{\infty} \frac{(a_1 + m)_r}{r !} \, z^r$ and identity $(a_1)_{m + r} = (a_1)_{m} \, (a_1 + m)_r = (a_1)_r \, (a_1 + r)_{m}$, we have
\begin{align}
	&(1 - z)^{-a_1} \, \mathcal{F}^{(1)}_3 \left(a_1, a_2, b_1, b_2; c; t_1, t_2, k_1, k_2, \frac{x}{1 - z}, y\right) \nonumber\\
	& = \sum_{m,n \geq 0}  \, \sum_{r = 0}^{\infty} \frac{(a_1)_r}{r !} \, z^r  \frac{(a_1 + r)_{m} \, (a_2)_n (b_1)_m \, (b_2)_n \, (-1)^{mk_1} \, (-t_1)_{mk_1} \, (-1)^{nk_2} \, (-t_2)_{nk_2}}{ (c)_{m+n} \, m! \, n!} \ x^m \, y^n\nonumber\\
	& = \sum_{r = 0}^{\infty} \frac{(a_1)_r}{r !} \, z^r \, \mathcal{F}^{(1)}_3 (a_1 + r, a_2, b_1, b_2; c ; t_1, t_2, k_1, k_2, x, y).
\end{align}
It completes the proof.
\end{proof}

\section{Recursion Formulae}
We have the following theorem:
\begin{theorem} 
Following recursion formulas hold true for $\mathcal{F}^{(1)}_3$:
\begin{align}
& \mathcal{F}^{(1)}_3 (a_1 + s, a_2, b_1, b_2; c; t_1, t_2, k_1, k_2, x, y) \nonumber\\
& = \mathcal{F}^{(1)}_3 (a_1, a_2, b_1, b_2; c ; t_1, t_2, k_1, k_2, x, y)\nonumber\\
& \quad  + \frac{(-1)^{k_1} \, (-t_1)_{k_1} \, b_1 \, x}{c}  \sum_{r = 1}^{s} \mathcal{F}^{(1)}_3 (a_1 + r, a_2, b_1 + 1, b_2; c + 1; t_1 - k_1, t_2, k_1, k_2, x, y);\label{e6.1}\\
 & \mathcal{F}^{(1)}_3 (a_1 - s, a_2, b_1, b_2; c ; t_1, t_2, k_1, k_2, x, y) \nonumber\\
& = \mathcal{F}^{(1)}_3 (a_1, a_2, b_1, b_2; c ; t_1, t_2, k_1, k_2, x, y)\nonumber\\
& \quad  - \frac{(-1)^{k_1} \, (-t_1)_{k_1} \, b_1 \, x}{c} \sum_{r = 0}^{s - 1} \mathcal{F}^{(1)}_3 (a_1 - r, a_2, b_1 + 1, b_2; c + 1; t_1 - k_1, t_2, k_1, k_2, x, y);\\
& \mathcal{F}^{(1)}_3 (a_1, a_2 + s, b_1, b_2; c; t_1, t_2, k_1, k_2, x, y) \nonumber\\
& = \mathcal{F}^{(1)}_3 (a_1, a_2, b_1, b_2; c ; t_1, t_2, k_1, k_2, x, y)\nonumber\\
&\quad  + \frac{(-1)^{k_2} \, (-t_2)_{k_2} \, b_2 \, y}{c}  \sum_{r = 1}^{s} \mathcal{F}^{(1)}_3 (a_1, a_2 + r, b_1, b_2 + 1; c + 1; t_1, t_2 - k_2, k_1, k_2, x, y);\\
& \mathcal{F}^{(1)}_3 (a_1, a_2 - s, b_1, b_2; c ; t_1, t_2, k_1, k_2, x, y) \nonumber\\
& = \mathcal{F}^{(1)}_3 (a_1, a_2, b_1, b_2; c ; t_1, t_2, k_1, k_2, x, y)\nonumber\\
& \quad  - \frac{(-1)^{k_2} \, (-t_2)_{k_2} \, b_2 \, y}{c}  \sum_{r = 0}^{s - 1} \mathcal{F}^{(1)}_3 (a_1, a_2 - r, b_1, b_2 + 1; c + 1; t_1, t_2 - k_2, k_1, k_2, x, y);\\
& \mathcal{F}^{(1)}_3 (a_1, a_2, b_1 + s, b_2; c ; t_1, t_2, k_1, k_2, x, y) \nonumber\\
& = \mathcal{F}^{(1)}_3 (a_1, a_2, b_1, b_2; c ; t_1, t_2, k_1, k_2, x, y)\nonumber\\
& \quad  + \frac{(-1)^{k_1} \, (-t_1)_{k_1} \, a \, x}{c}  \sum_{r = 1}^{s} \mathcal{F}^{(1)}_3 (a_1 + 1, a_2, b_1 + r, b_2; c + 1; t_1 - k_1, t_2, k_1, k_2, x, y);\\
& \mathcal{F}^{(1)}_3 (a_1, a_2, b_1 - s, b_2; c ; t_1, t_2, k_1, k_2, x, y) \nonumber\\
& = \mathcal{F}^{(1)}_3 (a_1, a_2, b_1, b_2; c ; t_1, t_2, k_1, k_2, x, y)\nonumber\\
& \quad  - \frac{(-1)^{k_1} \, (-t_1)_{k_1} \, a_1 \, x}{c}  \sum_{r = 0}^{s - 1} \mathcal{F}^{(1)}_3 (a_1 + 1, b_1 - r, b_2; c + 1; t_1 - k_1, t_2, k_1, k_2, x, y);\\
& \mathcal{F}^{(1)}_3 (a_1, a_2, b_1, b_2; c - s; t_1, t_2, k_1, k_2, x, y) \nonumber\\
& = \mathcal{F}^{(1)}_3 (a_1, a_2, b_1, b_2; c ; t_1, t_2, k_1, k_2, x, y) \nonumber\\
& \quad  + (-1)^{k_1} \, (-t_1)_{k_1} \, a_1 \, b_1 \, x  \sum_{r = 1}^{s} \frac{\mathcal{F}^{(1)}_3 (a_1 + 1, a_2, b_1 + 1, b_2; c + 2 - r; t_1 - k_1, t_2, k_1, k_2, x, y)}{(c - r) \, (c - r + 1)}\nonumber\\
& \quad + (-1)^{k_2} \, (-t_2)_{k_2} \, a_2 \, b_2 \, y  \sum_{r = 1}^{s} \frac{\mathcal{F}^{(1)}_3 (a_1, a_2 + 1, b_1, b_2 + 1; c + 2 - r; t_1, t_2 - k_2, k_1, k_2, x, y)}{(c - r) \, (c - r + 1)}.
\end{align}
\end{theorem}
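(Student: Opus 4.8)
The plan is to reduce each of the seven identities to a single elementary \emph{contiguous relation} (the case $s=1$) and then obtain the general-$s$ statement by telescoping. The only structural ingredient beyond reindexing is the Pochhammer splitting $(-t_1)_{(m+1)k_1}=(-t_1)_{k_1}\,(-t_1+k_1)_{mk_1}$, which is the standard identity $(\alpha)_{p+q}=(\alpha)_p\,(\alpha+p)_q$ with $\alpha=-t_1$, $p=k_1$, $q=mk_1$; the parameter change $t_1\mapsto t_1-k_1$ appearing throughout the theorem is precisely what recasts the residual factor $(-t_1+k_1)_{mk_1}$ as a genuine $(-(t_1-k_1))_{mk_1}$ term of a discrete $F_3$ series.

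First I would establish the base case of \eqref{e6.1}. Subtracting the two series and using $(a_1+1)_m-(a_1)_m=\tfrac{m}{a_1}(a_1)_m$, the $m=0$ term drops out; reindexing $m\mapsto m+1$ and applying $(a_1)_{m+1}=a_1(a_1+1)_m$, $(b_1)_{m+1}=b_1(b_1+1)_m$, $(c)_{m+1+n}=c\,(c+1)_{m+n}$, the splitting above, and $(-1)^{(m+1)k_1}=(-1)^{k_1}(-1)^{mk_1}$, the remaining series is exactly $\mathcal{F}^{(1)}_3(a_1+1,a_2,b_1+1,b_2;c+1;t_1-k_1,t_2,k_1,k_2,x,y)$ and the prefactor collapses to $\tfrac{(-1)^{k_1}(-t_1)_{k_1}b_1\,x}{c}$, giving
\begin{align}
& \mathcal{F}^{(1)}_3(a_1+1,\dots)-\mathcal{F}^{(1)}_3(a_1,\dots)\nonumber\\
& = \frac{(-1)^{k_1}(-t_1)_{k_1}\,b_1\,x}{c}\,\mathcal{F}^{(1)}_3(a_1+1,a_2,b_1+1,b_2;c+1;t_1-k_1,t_2,k_1,k_2,x,y).
\end{align}
Replacing $a_1$ by $a_1+r-1$ and summing this telescoping relation over $r=1,\dots,s$ yields \eqref{e6.1}; running the telescope downward (replacing $a_1$ by $a_1-r$, summing over $r=0,\dots,s-1$) yields its decreasing companion.

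The remaining shifts in $b_1$, $a_2$, $b_2$ are handled in the same manner. Since $(a_1)_m$ and $(b_1)_m$ enter the series symmetrically, shifting $b_1$ leaves a residual factor $a_1$ in the prefactor (whereas shifting $a_1$ left a residual $b_1$), which accounts for the $a_1$ appearing in the $b_1$-formulae; the $a_2,b_2$ versions follow from the $m\leftrightarrow n$ symmetry with $k_1\leftrightarrow k_2$, $t_1\leftrightarrow t_2$, $x\leftrightarrow y$. The hard part will be the last formula, the recursion in $c$, because one step now produces \emph{two} descendant functions. Here I would start from
\begin{align}
\frac{1}{(c-1)_{m+n}}-\frac{1}{(c)_{m+n}}=\frac{m+n}{(c-1)\,(c)_{m+n}},\nonumber
\end{align}
split $m+n$ into its $m$- and $n$-parts, and treat each as above (reindex $m\mapsto m+1$ in the first, $n\mapsto n+1$ in the second), now also picking up $a_1$ from $(a_1)_{m+1}$ and $a_2$ from $(a_2)_{n+1}$. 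This produces the two-term base case
\begin{align}
& \mathcal{F}^{(1)}_3(\dots;c-1;\dots)-\mathcal{F}^{(1)}_3(\dots;c;\dots)\nonumber\\
& = \frac{(-1)^{k_1}(-t_1)_{k_1}a_1 b_1 x}{(c-1)c}\,\mathcal{F}^{(1)}_3(a_1+1,a_2,b_1+1,b_2;c+1;t_1-k_1,t_2,k_1,k_2,x,y)\nonumber\\
& \quad + \frac{(-1)^{k_2}(-t_2)_{k_2}a_2 b_2 y}{(c-1)c}\,\mathcal{F}^{(1)}_3(a_1,a_2+1,b_1,b_2+1;c+1;t_1,t_2-k_2,k_1,k_2,x,y).
\end{align}
Finally, replacing $c$ by $c-r+1$ and telescoping over $r=1,\dots,s$ converts the denominator $(c-1)c$ into $(c-r)(c-r+1)$ and the argument $c+1$ into $c+2-r$, producing the two $r$-indexed sums of the last formula. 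I expect the only real care is keeping the denominators $(c-r)(c-r+1)$ aligned with the shifted arguments $c+2-r$ as the telescope is summed.
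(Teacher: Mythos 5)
Your proposal is correct and follows essentially the same route as the paper: both establish the single-step contiguous relation (via the splitting $(-t_1)_{(m+1)k_1}=(-t_1)_{k_1}(-t_1+k_1)_{mk_1}$ together with the shift $m\mapsto m+1$) and then iterate it, the paper by verifying the one- and two-step cases and invoking induction, you by telescoping directly over $r=1,\dots,s$. Your explicit treatment of the $c$-recursion via $\frac{1}{(c-1)_{m+n}}-\frac{1}{(c)_{m+n}}=\frac{m+n}{(c-1)(c)_{m+n}}$ is exactly the "same manner" argument the paper leaves to the reader.
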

\begin{proof}
To prove the formula \eqref{e6.1}, we begin with
\begin{align}
 &\mathcal{F}^{(1)}_3 (a_1, a_2, b_1, b_2; c ; t_1, t_2, k_1, k_2, x, y) + \frac{(-1)^{k_1} \, (-t_1)_{k_1} \, b_1 \, x}{c} \nonumber\\
& \quad \times  \mathcal{F}^{(1)}_3 (a_1 + 1, a_2, b_1 + 1, b_2; c + 1; t_1 - k_1, t_2, k_1, k_2, x, y) \nonumber\\
& = \sum_{m,n \geq 0}  \, \frac{(a_1)_{m} \, (a_2)_n \, (b_1)_m \, (b_2)_n \, (-1)^{m k_1} \, (-t_1)_{mk_1} \, (-1)^{nk_2} \, (-t_2)_{nk_2}}{ (c)_{m+n} \, m! \, n!} \ x^m \, y^n + \frac{(-1)^{k_1} \, (-t_1)_{k_1} \, b_1 \, x}{c} \nonumber\\
& \quad \times \sum_{m,n \geq 0}  \, \frac{(a_1 + 1)_{m} \, (a_2)_n \, (b_1 + 1)_m \, (b_2)_n \, (-1)^{mk_1} \, (-t_1 + k_1)_{mk_1} \, (-1)^{nk_2} \, (-t_2)_{nk_2}}{ (c + 1)_{m+n} \, m! \, n!} \ x^m \, y^n\nonumber\\
& = \sum_{m,n \geq 0}  \, \frac{(a_1)_{m} \, (a_2)_n \, (b_1)_m \, (b_2)_n \, (-1)^{mk_1} \, (-t_1)_{mk_1} \, (-1)^{nk_2} \, (-t_2)_{nk_2}}{ (c)_{m+n} \, m! \, n!} \ x^m \, y^n \nonumber\\
& \quad + \sum_{m,n \geq 0} \frac{m}{a_1} \, \frac{(a_1)_{m} \, (a_2)_n \, (b_1)_m \, (b_2)_n \, (-1)^{mk_1} \, (-t_1)_{mk_1} \, (-1)^{nk_2} \, (-t_2)_{nk_2}}{ (c)_{m+n} \, m! \, n!} \ x^m \, y^n \nonumber\\
& =  \sum_{m,n \geq 0}  \frac{a_1 + m }{a_1} \, \frac{(a_1)_{m} \, (a_2)_n \, (b_1)_m \, (b_2)_n \, (-1)^{mk_1} \, (-t_1)_{mk_1} \, (-1)^{nk_2} \, (-t_2)_{nk_2}}{ (c)_{m+n} \, m! \, n!} \ x^m \, y^n\nonumber\\
& = \mathcal{F}^{(1)}_3 (a_1 + 1, a_2, b_1, b_2; c; t_1, t_2, k_1, k_2, x, y). 
\end{align}
Again
\begin{align}
	&\mathcal{F}^{(1)}_3 (a_1, a_2, b_1, b_2; c ; t_1, t_2, k_1, k_2, x, y) + \frac{(-1)^{k_1} \, (-t_1)_{k_1} \, b_1 \, x}{c} \nonumber\\
	& \quad \times  [\mathcal{F}^{(1)}_3 (a_1 + 1, a_2, b_1 + 1, b_2; c + 1; t_1 - k_1, t_2, k_1, k_2, x, y) \nonumber\\
	& \quad  + \mathcal{F}^{(1)}_3 (a + 2, b_1 + 1, b_2; c + 1; t_1 - k_1, t_2, k_1, k_2, x, y)] \nonumber\\
	& =  \sum_{m,n \geq 0} \frac{(a_1)_{m} \, (a_2)_n \, (b_1)_m \, (b_2)_n \, (-1)^{mk_1} \, (-t_1)_{mk_1} \, (-1)^{nk_2} \, (-t_2)_{nk_2}}{ (c)_{m+n} \, m! \, n!} \ x^m \, y^n \nonumber\\
	& \quad + \frac{(-1)^{k_1} \, (-t_1)_{k_1} \, b_1 \, x}{c}  \sum_{m,n \geq 0} (a_1 + 2)_{m - 1} (2a_1 + m + 2) \nonumber\\
	& \quad \times \frac{(a_2)_n \, (b_1 + 1)_m \, (b_2)_n \, (-1)^{mk_1} \, (-t_1 + k_1)_{mk_1} \, (-1)^{nk_2} \, (-t_2)_{nk_2}}{ (c + 1)_{m+n} \, m! \, n!} \ x^m \, y^n\nonumber\\
	& = \sum_{m, n \ge 0} \frac{a_1 \, (a_1 + 1) + (2a_1 + m + 1) (m)}{a \, (a + 1)}\nonumber\\
	& \quad \times \frac{(a_1)_{m} \, (a_2)_n \, (b_1)_m \, (b_2)_n \, (-1)^{mk_1} \, (-t_1)_{mk_1} \, (-1)^{nk_2} \, (-t_2)_{nk_2}}{ (c)_{m+n} \, m! \, n!} \ x^m \, y^n\nonumber\\
	& = \frac{(a_1 + 2)_{m} \, (a_2)_n \, (b_1)_m \, (b_2)_n \, (-1)^{mk_1} \, (-t_1)_{mk_1} \, (-1)^{nk_2} \, (-t_2)_{nk_2}}{ (c)_{m+n} \, m! \, n!} \ x^m \, y^n\nonumber\\
	& = \mathcal{F}^{(1)}_3 (a_1 + 2, a_2, b_1, b_2; c; t_1, t_2, k_1, k_2, x, y).
\end{align}
We can generalize the result and get the formula \eqref{e6.1}. Hence the proof is completed. The other formulae can be verified in the same manner. 
\end{proof}
Now, we produce a list of difference and differential recursion formulas obeyed by discrete Appell function $\mathcal{F}^{(1)}_3$. Let $\theta = x \, \frac{\partial}{\partial x}$ and $\phi = y \, \frac{\partial}{\partial y}$ be the differential operators. Then, the following simple differential formulae can be easily verified: 
\begin{align}
&	a_1 \, \mathcal{F}^{(1)}_3 (a_1 + 1)  = (a_1 + \theta) \, \mathcal{F}^{(1)}_3;\\
& (a_1 + \theta - 1) \, \mathcal{F}^{(1)}_3 (a_1 - 1)  =	(a_1 - 1) \, \mathcal{F}^{(1)}_3;\\
&	a_2 \, \mathcal{F}^{(1)}_3 (a_2 + 1)  = (a_2 + \phi) \, \mathcal{F}^{(1)}_3;\\
& (a_2 + \phi - 1) \, \mathcal{F}^{(1)}_3 (a_2 - 1)  =	(a_2 - 1) \, \mathcal{F}^{(1)}_3;\\
&	b_1 \, \mathcal{F}^{(1)}_3 (b_1 + 1)  = (b_1 + \theta) \, \mathcal{F}^{(1)}_3;\\
&(b_1 + \theta - 1) \, \mathcal{F}^{(1)}_3 (b_1 - 1)  =	(b_1 - 1) \, \mathcal{F}^{(1)}_3;\\
&	b_2 \, \mathcal{F}^{(1)}_3 (b_2 + 1)  = (b_2 + \phi) \, \mathcal{F}^{(1)}_3;\\
& (b_2  + \phi - 1) \, \mathcal{F}^{(1)}_3 (b_2 - 1)  =	(b_2 - 1) \, \mathcal{F}^{(1)}_3;\\
&	(c - 1) \, \mathcal{F}^{(1)}_3 (c - 1)  = (c + \theta + \phi - 1) \, \mathcal{F}^{(1)}_3;\\
& (c + \theta + \phi) \, \mathcal{F}^{(1)}_3 (c + 1)  =	c \, \mathcal{F}^{(1)}_3;
\end{align}
where
\begin{align}
&\mathcal{F}^{(1)}_3 (a_1 \pm 1)  = \mathcal{F}^{(1)}_3 (a_1 \pm 1, a_2, b_1, b_2; c; t_1, t_2, k_1, k_2, x, y),
%&\mathcal{F}^{(1)}_3  = \mathcal{F}^{(1)}_3 (a_1, a_2, b_1, b_2; c; t_1, t_2, k_1, k_2, x, y),\nonumber
\end{align}
etc. Any two of the above differential recursion relations can be combined and hence produced a first or second order differential recursion relation as follows:
\begin{align}
& a_1 \, (a_1 - 1) \, \mathcal{F}^{(1)}_3 (a_1 + 1) - (a_1 + \theta) \, (a_1 + \theta - 1) \mathcal{F}^{(1)}_3 (a_1 - 1) = 0;\\
& a_1 \, (b_1 - 1) \, \mathcal{F}^{(1)}_3 (a_1 + 1) - (a_1 + \theta) \, (b_1 + \theta  - 1) \mathcal{F}^{(1)}_3 (b_1 - 1) = 0;\\
& a_1 \, (b_2 - 1) \, \mathcal{F}^{(1)}_3 (a_1 + 1) - (a_1 + \theta) \, (b_2 + \phi - 1) \mathcal{F}^{(1)}_3 (b_2 - 1) = 0;\\
& a_1 \, c \, \mathcal{F}^{(1)}_3 (a_1 + 1) - (a_1 + \theta) \, (c + \theta + \phi) \mathcal{F}^{(1)}_3 (c + 1) = 0;\\
	& a_1 \, (b_1 + \theta) \, \mathcal{F}^{(1)}_3 (a_1 + 1) - b_1 \, (a_1 + \theta) \, \mathcal{F}^{(1)}_3 (b_1 + 1) = 0;\\
		& a_1 \, (b_2 + \phi) \, \mathcal{F}^{(1)}_3 (a_1 + 1) - b_2 \, (a_1 + \theta) \, \mathcal{F}^{(1)}_3 (b_2 + 1) = 0;\\
	& a_1 \, (c + \theta + \phi - 1) \, \mathcal{F}^{(1)}_3 (a_1 + 1) - (c - 1) \, (a_1 + \theta) \, \mathcal{F}^{(1)}_3 (c - 1) = 0;\\	
	& a_1 \, (a_2 + \phi) \, \mathcal{F}^{(1)}_3 (a_1 + 1) - a_2 \, (a_1 + \theta) \, \mathcal{F}^{(1)}_3 (a_2 + 1) = 0;\\
	& a_1 \, (a_2 - 1) \, \mathcal{F}^{(1)}_3 (a_1 + 1) - (a_1 + \theta) \, (a_2 + \phi - 1) \, \mathcal{F}^{(1)}_3 (a_2 - 1) = 0;\\
	& (a_1 + \theta - 1) \, (a_2 + \phi) \, \mathcal{F}^{(1)}_3 (a_1 - 1) - a_2 \, (a_1 - 1) \mathcal{F}^{(1)}_3 (a_2 + 1) = 0;\\
	& (a_2 - 1) \, (a_1 + \theta - 1) \, \mathcal{F}^{(1)}_3 (a_1 - 1) - (a_1 - 1) \, (a_2 + \phi  - 1) \mathcal{F}^{(1)}_3 (a_2 - 1) = 0;\\
	& a_2 \, (a_2 - 1) \, \mathcal{F}^{(1)}_3 (a_2 + 1) - (a_2 + \phi) \, (a_2 + \phi - 1) \mathcal{F}^{(1)}_3 (a_2 - 1) = 0;\\
	& a_2 \, (b_1 - 1) \, \mathcal{F}^{(1)}_3 (a_2 + 1) - (a_2 + \phi) \, (b_1 + \theta  - 1) \mathcal{F}^{(1)}_3 (b_1 - 1) = 0;\\
	& a_2 \, (b_2 - 1) \, \mathcal{F}^{(1)}_3 (a_2 + 1) - (a_2 + \phi) \, (b_2 + \phi - 1) \mathcal{F}^{(1)}_3 (b_2 - 1) = 0;\\
	& a_2 \, c \, \mathcal{F}^{(1)}_3 (a_2 + 1) - (a_2 + \phi) \, (c + \theta + \phi) \mathcal{F}^{(1)}_3 (c + 1) = 0;\\
	& a_2 \, (b_1 + \theta) \, \mathcal{F}^{(1)}_3 (a_2 + 1) - b_1 \, (a_2 + \phi) \, \mathcal{F}^{(1)}_3 (b_1 + 1) = 0;\\
	& a_2 \, (b_2 + \phi) \, \mathcal{F}^{(1)}_3 (a_2 + 1) - b_2 \, (a_2 + \phi) \, \mathcal{F}^{(1)}_3 (b_2 + 1) = 0;\\
	& a_2 \, (c + \theta + \phi - 1) \, \mathcal{F}^{(1)}_3 (a_2 + 1) - (c - 1) \, (a_2 + \phi) \, \mathcal{F}^{(1)}_3 (c - 1) = 0;\\
 	&(a_1 + \theta - 1) \, (b_1 + \theta) \, \mathcal{F}^{(1)}_3 (a_1 - 1) - b_1 \, (a_1 - 1)  \, \mathcal{F}^{(1)}_3 (b_1 + 1) = 0;\\
 &	(a_1 + \theta - 1) \, (b_2 + \phi) \, \mathcal{F}^{(1)}_3 (a_1 - 1) - b_2 \, (a_1 - 1)  \, \mathcal{F}^{(1)}_3 (b_2 + 1) = 0;\\
 &	(a_1 + \theta - 1) \, (c + \theta + \phi - 1) \, \mathcal{F}^{(1)}_3 (a_1 - 1) - (c - 1) \, (a_1 - 1)  \, \mathcal{F}^{(1)}_3 (c - 1) = 0;\\
 & (b_1 - 1) \,	(a_1 + \theta - 1) \,  \mathcal{F}^{(1)}_3 (a_1 - 1) -  (a_1 - 1)  \, (b_1 + \theta - 1) \mathcal{F}^{(1)}_3 (b_1 - 1) = 0;\\
 & (b_2 - 1) \,	(a_1 + \theta - 1) \,  \mathcal{F}^{(1)}_3 (a_1 - 1) -  (a_1 - 1)  \, (b_2 + \phi - 1) \mathcal{F}^{(1)}_3 (b_2 - 1) = 0;\\
 & c \,	(a_1 + \theta - 1) \,  \mathcal{F}^{(1)}_3 (a_1 - 1) -  (a_1 - 1)  \, (c + \theta + \phi) \mathcal{F}^{(1)}_3 (c + 1) = 0;\\
  	&(a_2 + \phi - 1) \, (b_1 + \theta) \, \mathcal{F}^{(1)}_3 (a_2 - 1) - b_1 \, (a_2 - 1)  \, \mathcal{F}^{(1)}_3 (b_1 + 1) = 0;\\
 &	(a_2 + \phi - 1) \, (b_2 + \phi) \, \mathcal{F}^{(1)}_3 (a_2 - 1) - b_2 \, (a_2 - 1)  \, \mathcal{F}^{(1)}_3 (b_2 + 1) = 0;\\
 &	(a_2 + \phi - 1) \, (c + \theta + \phi - 1) \, \mathcal{F}^{(1)}_3 (a_2 - 1) - (c - 1) \, (a_2 - 1)  \, \mathcal{F}^{(1)}_3 (c - 1) = 0;\\
 & (b_1 - 1) \,	(a_2 + \phi - 1) \,  \mathcal{F}^{(1)}_3 (a_2 - 1) -  (a_2 - 1)  \, (b_1 + \theta - 1) \mathcal{F}^{(1)}_3 (b_1 - 1) = 0;\\
 & (b_2 - 1) \,	(a_2 + \phi - 1) \,  \mathcal{F}^{(1)}_3 (a_2 - 1) -  (a_2 - 1)  \, (b_2 + \phi - 1) \mathcal{F}^{(1)}_3 (b_2 - 1) = 0;\\
 & c \,	(a_2 + \phi - 1) \,  \mathcal{F}^{(1)}_3 (a_2 - 1) -  (a_2 - 1)  \, (c + \theta + \phi) \mathcal{F}^{(1)}_3 (c + 1) = 0;\\
 & b_1 \,	(b_1 - 1) \,  \mathcal{F}^{(1)}_3 (b_1 + 1) -  (b_1 + \theta)  \, (b_1 + \theta - 1) \mathcal{F}^{(1)}_3 (b_1 - 1) = 0;\\
 & b_1 \,	(b_2 + \phi) \,  \mathcal{F}^{(1)}_3 (b_1 + 1) -  b_2  \, (b_1 + \theta ) \mathcal{F}^{(1)}_3 (b_2 + 1) = 0;\\
  & b_1 \,	(b_2 - 1) \,  \mathcal{F}^{(1)}_3 (b_1 + 1) -  (b_1 + \theta)  \, (b_2 + \phi - 1) \mathcal{F}^{(1)}_3 (b_2 - 1) = 0;\\
  & b_1 \,	(c + \theta + \phi - 1) \,  \mathcal{F}^{(1)}_3 (b_1 + 1) -  (c - 1)  \, (b_1 + \theta) \mathcal{F}^{(1)}_3 (c - 1) = 0;\\
  & b_1 \,	c \,  \mathcal{F}^{(1)}_3 (b_1 + 1) -  (c + \theta +\phi)  \, (b_1 + \theta) \mathcal{F}^{(1)}_3 (c + 1) = 0;\\
   & b_2 \,	(b_1 - 1) \,  \mathcal{F}^{(1)}_3 (b_2 + 1) -  (b_2 + \phi)  \, (b_1 + \theta - 1) \mathcal{F}^{(1)}_3 (b_1 - 1) = 0;\\
  & b_2 \,	(b_2 - 1) \,  \mathcal{F}^{(1)}_3 (b_2 + 1) -  (b_2 + \phi)  \, (b_2 + \phi - 1) \mathcal{F}^{(1)}_3 (b_2 - 1) = 0;\\
  & b_2 \,	(c + \theta + \phi - 1) \,  \mathcal{F}^{(1)}_3 (b_2 + 1) -  (c - 1)  \, (b_2 + \phi) \mathcal{F}^{(1)}_3 (c - 1) = 0;\\
  & b_2 \,	c \,  \mathcal{F}^{(1)}_3 (b_2 + 1) -  (c + \theta +\phi)  \, (b_2 + \phi) \mathcal{F}^{(1)}_3 (c + 1) = 0;\\
  & (b_2 - 1) \,	(b_1 + \theta - 1) \,  \mathcal{F}^{(1)}_3 (b_1 - 1) -  (b_1 - 1)  \, (b_2 + \phi - 1) \mathcal{F}^{(1)}_3 (b_2 - 1) = 0;\\
  & (b_1 + \theta - 1) \,	(c + \theta + \phi - 1) \,  \mathcal{F}^{(1)}_3 (b_1 - 1) -  (c - 1)  \, (b_1 - 1) \mathcal{F}^{(1)}_3 (c - 1) = 0;\\
  & 	c \, (b_1 + \theta - 1) \, \mathcal{F}^{(1)}_3 (b_1 - 1) - (b_1 - 1)  (c + \theta +\phi)  \,  \mathcal{F}^{(1)}_3 (c + 1) = 0;\\
  & (b_2 + \phi - 1) \,	(c + \theta + \phi - 1) \,  \mathcal{F}^{(1)}_3 (b_2 - 1) -  (c - 1)  \, (b_2 - 1) \mathcal{F}^{(1)}_3 (c - 1) = 0;\\
  & 	c \, (b_2 + \phi - 1) \, \mathcal{F}^{(1)}_3 (b_2 - 1) - (b_2 - 1)  (c + \theta +\phi)  \,  \mathcal{F}^{(1)}_3 (c + 1) = 0;\\
   & 	c \, (c - 1) \, \mathcal{F}^{(1)}_3 (c - 1) - (c + \theta + \phi - 1)  (c + \theta +\phi)  \,  \mathcal{F}^{(1)}_3 (c + 1) = 0.
 \end{align}
The particular cases of these formulas lead to the differential recursion formulas for Appell function $F_3$ and Kamp\'e de F\'eriet  hypergeometric functions. Now, using  the difference relations, 
\begin{align}
	&	a_1 \, \mathcal{F}^{(1)}_3 (a_1 + 1)  = \left(a_1 + \frac{1}{k_1}\Theta_{t_1} \right) \, \mathcal{F}^{(1)}_3;\\
	& \left(a_1 + \frac{1}{k_1}\Theta_{t_1} - 1\right) \, \mathcal{F}^{(1)}_3 (a_1 - 1)  =	(a_1 - 1) \, \mathcal{F}^{(1)}_3;\\
		&	a_2 \, \mathcal{F}^{(1)}_3 (a_2 + 1)  = \left(a_2 + \frac{1}{k_2}\Theta_{t_2} \right) \, \mathcal{F}^{(1)}_3;\\
	& \left(a_2 + \frac{1}{k_2}\Theta_{t_2} - 1\right) \, \mathcal{F}^{(1)}_3 (a_2 - 1)  =	(a_2 - 1) \, \mathcal{F}^{(1)}_3;\\
	&	b_1 \, \mathcal{F}^{(1)}_3 (b_1 + 1)  = \left(b_1 + \frac{1}{k_1} \Theta_{t_1}\right) \, \mathcal{F}^{(1)}_3;\\
	&\left(b_1 + \frac{1}{k_1} \Theta_{t_1} - 1\right) \, \mathcal{F}^{(1)}_3 (b_1 - 1)  =	(b_1 - 1) \, \mathcal{F}^{(1)}_3;\\
	&	b_2 \, \mathcal{F}^{(1)}_3 (b_2 + 1)  = \left(b_2 + \frac{1}{k_2} \, \Theta_{t_2}\right) \, \mathcal{F}^{(1)}_3;\\
	& \left(b_2 + \frac{1}{k_2} \, \Theta_{t_2} - 1\right) \, \mathcal{F}^{(1)}_3 (b_2 - 1)  =	(b_2 - 1) \, \mathcal{F}^{(1)}_3;\\
	&	(c - 1) \, \mathcal{F}^{(1)}_3 (c - 1)  = \left(c + \frac{1}{k_1} \Theta_{t_1} + \frac{1}{k_2} \Theta_{t_2} - 1\right) \, \mathcal{F}^{(1)}_3;\\
	& \left(c + \frac{1}{k_1} \Theta_{t_1} + \frac{1}{k_2} \Theta_{t_2}\right) \, \mathcal{F}^{(1)}_3 (c + 1)  =	c \, \mathcal{F}^{(1)}_3,
\end{align}
one will get the following 45 difference recursion relations obeyed by discrete Appell function $\mathcal{F}^{(1)}_3$. 
\begin{align}
	& a_1 \, (a_1 - 1) \, \mathcal{F}^{(1)}_3 (a_1 + 1)\nonumber\\
	& \quad - \left(a_1 + \frac{1}{k_1} \Theta_{t_1}\right) \, \left(a_1 + \frac{1}{k_1} \Theta_{t_1}  - 1\right) \mathcal{F}^{(1)}_3 (a_1 - 1) = 0;\\
	& a_1 \, (b_1 - 1) \, \mathcal{F}^{(1)}_3 (a_1 + 1) \nonumber\\
	& \quad - \left(a_1 + \frac{1}{k_1} \Theta_{t_1} \right) \, \left(b_1 + \frac{1}{k_1} \Theta_{t_1}  - 1\right) \mathcal{F}^{(1)}_3 (b_1 - 1) = 0;\\
	& a_1 \, (b_2 - 1) \, \mathcal{F}^{(1)}_3 (a_1 + 1)\nonumber\\
	& \quad - \left(a_1 + \frac{1}{k_1} \Theta_{t_1}\right) \, \left(b_2 + \frac{1}{k_2} \Theta_{t_2} - 1\right) \mathcal{F}^{(1)}_3 (b_2 - 1) = 0;\\
	& a_1 \, c \, \mathcal{F}^{(1)}_3 (a_1 + 1)\nonumber\\
&	\quad  - \left(a_1 + \frac{1}{k_1} \Theta_{t_1}\right) \, \left(c + \frac{1}{k_1} \Theta_{t_1} + \frac{1}{k_2} \Theta_{t_2} \right) \mathcal{F}^{(1)}_3 (c + 1) = 0;\\
	& a_1 \, \left(b_1 + \frac{1}{k_1} \Theta_{t_1}\right) \, \mathcal{F}^{(1)}_3 (a_1 + 1) - b_1 \, \left(a_1 + \frac{1}{k_1} \Theta_{t_1}\right) \, \mathcal{F}^{(1)}_3 (b_1 + 1) = 0;\\
	& a_1 \, \left(b_2 + \frac{1}{k_2} \Theta_{t_2}\right) \, \mathcal{F}^{(1)}_3 (a_1 + 1) - b_2 \, \left(a_1 + \frac{1}{k_1} \Theta_{t_1}\right) \, \mathcal{F}^{(1)}_3 (b_2 + 1) = 0;\\
	& a_1 \, \left(c + \frac{1}{k_1} \Theta_{t_1} + \frac{1}{k_2} \Theta_{t_2} - 1\right) \, \mathcal{F}^{(1)}_3 (a_1 + 1)\nonumber\\
	& \quad  - (c - 1) \, \left(a_1 + \frac{1}{k_1} \Theta_{t_1}\right) \, \mathcal{F}^{(1)}_3 (c - 1) = 0;\\
	&\left(a_1 + \frac{1}{k_1} \Theta_{t_1} - 1\right) \, \left(b_1 + \frac{1}{k_1} \Theta_{t_1} \right) \, \mathcal{F}^{(1)}_3 (a_1 - 1) - b_1 \, (a_1 - 1)  \, \mathcal{F}^{(1)}_3 (b_1 + 1) = 0;\\
	&	\left(a_1 + \frac{1}{k_1} \Theta_{t_1} - 1\right) \, \left(b_2 + \frac{1}{k_2}  \Theta_{t_2}\right) \, \mathcal{F}^{(1)}_3 (a_1 - 1) - b_2 \, (a_1 - 1)  \, \mathcal{F}^{(1)}_3 (b_2 + 1) = 0;\\
	&	\left(a_1 + \frac{1}{k_1} \Theta_{t_1} - 1\right) \, \left(c + \frac{1}{k_1} \Theta_{t_1} + \frac{1}{k_2} \Theta_{t_2} - 1\right) \, \mathcal{F}^{(1)}_3 (a_1 - 1)\nonumber\\
	& \quad  - (c - 1) \, (a_1 - 1)  \, \mathcal{F}^{(1)}_3 (c - 1) = 0;\\
	& (b_1 - 1) \,	\left(a_1 + \frac{1}{k_1} \Theta_{t_1} - 1\right) \,  \mathcal{F}^{(1)}_3 (a_1 - 1) \nonumber\\
	& \quad -  (a_1 - 1)  \, \left(b_1 + \frac{1}{k_1} \Theta_{t_1}  - 1\right) \mathcal{F}^{(1)}_3 (b_1 - 1) = 0;\\
	& (b_2 - 1) \,	\left(a_1 + \frac{1}{k_1} \Theta_{t_1} - 1\right) \,  \mathcal{F}^{(1)}_3 (a_1 - 1)\nonumber\\
	& \quad  -  (a_1 - 1)  \, \left(b_2 + \frac{1}{k_2} \Theta_{t_2} - 1\right) \mathcal{F}^{(1)}_3 (b_2 - 1) = 0;\\
	& c \,	\left(a_1 + \frac{1}{k_1} \Theta_{t_1} - 1\right) \,  \mathcal{F}^{(1)}_3 (a_1 - 1)\nonumber\\
	& \quad -  (a_1 - 1)  \, \left(c + \frac{1}{k_1} \Theta_{t_1} + \frac{1}{k_2} \Theta_{t_2}\right) \mathcal{F}^{(1)}_3 (c + 1) = 0;\\
	&	a_1 \left(a_2 + \frac{1}{k_2} \Theta_{t_2}\right) \,  \mathcal{F}^{(1)}_3 (a_1 + 1)\nonumber\\
	& \quad  - a_2 \, \left(a_1 + \frac{1}{k_1} \Theta_{t_1}\right)  \, \mathcal{F}^{(1)}_3 (a_2 + 1) = 0;\\
	& a_1 (a_2 - 1) \,  \mathcal{F}^{(1)}_3 (a_1 + 1) \nonumber\\
	& \quad -  \left(a_1 + \frac{1}{k_1} \Theta_{t_1}\right)  \, \left(a_2 + \frac{1}{k_2} \Theta_{t_2}  - 1\right) \mathcal{F}^{(1)}_3 (a_2 - 1) = 0;\\
	& \left(a_1 + \frac{1}{k_1} \Theta_{t_1} - 1\right)  \, \left(a_2 + \frac{1}{k_2} \Theta_{t_2}\right) \,  \mathcal{F}^{(1)}_3 (a_1 - 1)\nonumber\\
	& \quad  -  a_2  \, (a_1 - 1) \mathcal{F}^{(1)}_3 (a_2 + 1) = 0;\\
	& a_2 \,	\left(a_1 + \frac{1}{k_1} \Theta_{t_1} - 1\right) \,  \mathcal{F}^{(1)}_3 (a_1 - 1)\nonumber\\
	& \quad -  (a_1 - 1)  \, \left(a_2 + \frac{1}{k_2} \Theta_{t_2} - 1\right) \mathcal{F}^{(1)}_3 (a_2 - 1) = 0;\\	
		& a_2 \, (a_2 - 1) \, \mathcal{F}^{(1)}_3 (a_2 + 1)\nonumber\\
	& \quad - \left(a_2 + \frac{1}{k_2} \Theta_{t_2}\right) \, \left(a_2 + \frac{1}{k_2} \Theta_{t_2}  - 1\right) \mathcal{F}^{(1)}_3 (a_2 - 1) = 0;\\
	& a_2 \, (b_1 - 1) \, \mathcal{F}^{(1)}_3 (a_2 + 1) \nonumber\\
	& \quad - \left(a_2 + \frac{1}{k_2} \Theta_{t_2} \right) \, \left(b_1 + \frac{1}{k_1} \Theta_{t_1}  - 1\right) \mathcal{F}^{(1)}_3 (b_1 - 1) = 0;\\
	& a_2 \, (b_2 - 1) \, \mathcal{F}^{(1)}_3 (a_2 + 1)\nonumber\\
	& \quad - \left(a_2 + \frac{1}{k_2} \Theta_{t_2}\right) \, \left(b_2 + \frac{1}{k_2} \Theta_{t_2} - 1\right) \mathcal{F}^{(1)}_3 (b_2 - 1) = 0;\\
	& a_2 \, c \, \mathcal{F}^{(1)}_3 (a_2 + 1)\nonumber\\
	&	\quad  - \left(a_2 + \frac{1}{k_2} \Theta_{t_2}\right) \, \left(c + \frac{1}{k_1} \Theta_{t_1} + \frac{1}{k_2} \Theta_{t_2} \right) \mathcal{F}^{(1)}_3 (c + 1) = 0;\\
	& a_2 \, \left(b_1 + \frac{1}{k_1} \Theta_{t_1}\right) \, \mathcal{F}^{(1)}_3 (a_2 + 1) - b_1 \, \left(a_2 + \frac{1}{k_2} \Theta_{t_2}\right) \, \mathcal{F}^{(1)}_3 (b_1 + 1) = 0;\\
	& a_2 \, \left(b_2 + \frac{1}{k_2} \Theta_{t_2}\right) \, \mathcal{F}^{(1)}_3 (a_2 + 1) - b_2 \, \left(a_2 + \frac{1}{k_2} \Theta_{t_2}\right) \, \mathcal{F}^{(1)}_3 (b_2 + 1) = 0;\\
	& a_2 \, \left(c + \frac{1}{k_1} \Theta_{t_1} + \frac{1}{k_2} \Theta_{t_2} - 1\right) \, \mathcal{F}^{(1)}_3 (a_2 + 1)\nonumber\\
	& \quad  - (c - 1) \, \left(a_2 + \frac{1}{k_2} \Theta_{t_2}\right) \, \mathcal{F}^{(1)}_3 (c - 1) = 0;\\
	&\left(a_2 + \frac{1}{k_2} \Theta_{t_2} - 1\right) \, \left(b_1 + \frac{1}{k_1} \Theta_{t_1} \right) \, \mathcal{F}^{(1)}_3 (a_2 - 1) - b_1 \, (a_2 - 1)  \, \mathcal{F}^{(1)}_3 (b_1 + 1) = 0;\\
	&	\left(a_2 + \frac{1}{k_2} \Theta_{t_2} - 1\right) \, \left(b_2 + \frac{1}{k_2}  \Theta_{t_2}\right) \, \mathcal{F}^{(1)}_3 (a_2 - 1) - b_2 \, (a_2 - 1)  \, \mathcal{F}^{(1)}_3 (b_2 + 1) = 0;\\
	&	\left(a_2 + \frac{1}{k_2} \Theta_{t_2} - 1\right) \, \left(c + \frac{1}{k_1} \Theta_{t_1} + \frac{1}{k_2} \Theta_{t_2} - 1\right) \, \mathcal{F}^{(1)}_3 (a_2 - 1)\nonumber\\
	& \quad  - (c - 1) \, (a_2 - 1)  \, \mathcal{F}^{(1)}_3 (c - 1) = 0;\\
	& (b_1 - 1) \,	\left(a_2 + \frac{1}{k_2} \Theta_{t_2} - 1\right) \,  \mathcal{F}^{(1)}_3 (a_2 - 1) \nonumber\\
	& \quad -  (a_2 - 1)  \, \left(b_1 + \frac{1}{k_1} \Theta_{t_1}  - 1\right) \mathcal{F}^{(1)}_3 (b_1 - 1) = 0;\\
	& (b_2 - 1) \,	\left(a_2 + \frac{1}{k_2} \Theta_{t_2} - 1\right) \,  \mathcal{F}^{(1)}_3 (a_2 - 1)\nonumber\\
	& \quad  -  (a_2 - 1)  \, \left(b_2 + \frac{1}{k_2} \Theta_{t_2} - 1\right) \mathcal{F}^{(1)}_3 (b_2 - 1) = 0;\\
	& c \,	\left(a_2 + \frac{1}{k_2} \Theta_{t_2} - 1\right) \,  \mathcal{F}^{(1)}_3 (a_2 - 1)\nonumber\\
	& \quad -  (a_2 - 1)  \, \left(c + \frac{1}{k_1} \Theta_{t_1} + \frac{1}{k_2} \Theta_{t_2}\right) \mathcal{F}^{(1)}_3 (c + 1) = 0;\\
	& b_1 \,	(b_1 - 1) \,  \mathcal{F}^{(1)}_3 (b_1 + 1) -  \left(b_1 + \frac{1}{k_1} \Theta_{t_1} \right)  \, \left(b_1 + \frac{1}{k_1} \Theta_{t_1}  - 1\right) \mathcal{F}^{(1)}_3 (b_1 - 1) = 0;\\
	& b_1 \,	\left(b_2 + \frac{1}{k_2} \Theta_{t_2}\right) \,  \mathcal{F}^{(1)}_3 (b_1 + 1) -  b_2  \, \left(b_1 + \frac{1}{k_1} \Theta_{t_1}\right) \mathcal{F}^{(1)}_3 (b_2 + 1) = 0;\\
	& b_1 \,	(b_2 - 1) \,  \mathcal{F}^{(1)}_3 (b_1 + 1) -  \left(b_1 + \frac{1}{k_1} \Theta_{t_1}\right)  \, \left(b_2 + \frac{1}{k_2} \Theta_{t_2} - 1\right) \mathcal{F}^{(1)}_3 (b_2 - 1) = 0;\\
	& b_1 \,	\left(c + \frac{1}{k_1} \Theta_{t_1} + \frac{1}{k_2} \Theta_{t_2} -  1\right) \,  \mathcal{F}^{(1)}_3 (b_1 + 1) -  (c - 1)  \, \left(b_1 + \frac{1}{k_1} \Theta_{t_1} \right) \mathcal{F}^{(1)}_3 (c - 1) = 0;\\
	& b_1 \,	c \,  \mathcal{F}^{(1)}_3 (b_1 + 1) -  \left(c + \frac{1}{k_1} \Theta_{t_1} + \frac{1}{k_2} \Theta_{t_2}\right)  \, \left(b_1 + \frac{1}{k_1} \Theta_{t_1}\right) \mathcal{F}^{(1)}_3 (c + 1) = 0;\\
	& b_2 \,	(b_1 - 1) \,  \mathcal{F}^{(1)}_3 (b_2 + 1) -  \left(b_2 + \frac{1}{k_2} \Theta_{t_2}\right)  \, \left(b_1 + \frac{1}{k_1} \Theta_{t_1}  - 1\right) \mathcal{F}^{(1)}_3 (b_1 - 1) = 0;\\
	& b_2 \,	(b_2 - 1) \,  \mathcal{F}^{(1)}_3 (b_2 + 1) -  \left(b_2 + \frac{1}{k_2} \Theta_{t_2}\right)  \, \left(b_2 + \frac{1}{k_2} \Theta_{t_2}  - 1\right) \mathcal{F}^{(1)}_3 (b_2 - 1) = 0;\\
	& b_2 \,	\left(c + \frac{1}{k_1} \Theta_{t_1} + \frac{1}{k_2} \Theta_{t_2} - 1\right) \,  \mathcal{F}^{(1)}_3 (b_2 + 1) -  (c - 1)  \, \left(b_2 + \frac{1}{k_2}  \Theta_{t_2}\right) \mathcal{F}^{(1)}_3 (c - 1) = 0;\\
	& b_2 \,	c \,  \mathcal{F}^{(1)}_3 (b_2 + 1) -  \left(c + \frac{1}{k_1} \Theta_{t_1} + \frac{1}{k_2} \Theta_{t_2}\right)  \, \left(b_2 + \frac{1}{k_2} \Theta_{t_2}\right) \mathcal{F}^{(1)}_3 (c + 1) = 0;\\
	& (b_2 - 1) \,	\left(b_1 + \frac{1}{k_1} \Theta_{t_1}  - 1\right) \,  \mathcal{F}^{(1)}_3 (b_1 - 1) -  (b_1 - 1)  \, \left(b_2 + \frac{1}{k_2}  \Theta_{t_2} - 1\right) \mathcal{F}^{(1)}_3 (b_2 - 1) = 0;\\
	& \left(b_1 + \frac{1}{k_1} \Theta_{t_1} - 1\right) \,	\left(c + \frac{1}{k_1} \Theta_{t_1} + \frac{1}{k_2} \Theta_{t_2} - 1\right) \,  \mathcal{F}^{(1)}_3 (b_1 - 1)\nonumber\\
	& \quad -  (c - 1)  \, (b_1 - 1) \mathcal{F}^{(1)}_3 (c - 1) = 0;\\
	& 	c \, \left(b_1 + \frac{1}{k_1} \Theta_{t_1} - 1\right) \, \mathcal{F}^{(1)}_3 (b_1 - 1) - (b_1 - 1)  \left(c + \frac{1}{k}(\Theta_{t_1} + \Theta_{t_2})\right)  \,  \mathcal{F}^{(1)}_3 (c + 1) = 0;\\
	& \left(b_2 + \frac{1}{k_2} \Theta_{t_2} - 1\right) \,	\left(c + \frac{1}{k_1} \Theta_{t_1} + \frac{1}{k_2} \Theta_{t_2} - 1\right) \,  \mathcal{F}^{(1)}_3 (b_2 - 1)\nonumber\\
	& \quad -  (c - 1)  \, (b_2 - 1) \mathcal{F}^{(1)}_3 (c - 1) = 0;\\
	& 	c \, \left(b_2 + \frac{1}{k_2}  \Theta_{t_2} - 1\right) \, \mathcal{F}^{(1)}_3 (b_2 - 1) - (b_2 - 1)  \left(c + \frac{1}{k_1} \Theta_{t_1} + \frac{1}{k_2} \Theta_{t_2}\right)  \,  \mathcal{F}^{(1)}_3 (c + 1) = 0;\\
	& 	c \, (c - 1) \, \mathcal{F}^{(1)}_3 (c - 1) - \left(c + \frac{1}{k_1} \Theta_{t_1} + \frac{1}{k_2} \Theta_{t_2} - 1\right)  \left(c + \frac{1}{k_1} \Theta_{t_1} + \frac{1}{k_2} \Theta_{t_2}\right)  \,  \mathcal{F}^{(1)}_3 (c + 1) = 0.
\end{align}

\section{The discrete Appell function $\mathcal{F}_3^{(2)}$}
In this section, we examine the second form of discrete Appell function $\mathcal{F}^{(2)}_3$ given by \eqref{3.2} and find its regions of convergence, difference-differential equations, integral representations, difference-differential formulas, finite and infinite summation formulas. As the method of proofs is same as that for the discrete Appell function $\mathcal{F}^{(1)}_3$, we list the results for  $\mathcal{F}^{(2)}_3$ without proof.

% Let  $a_1$, $a_2$, $b_1$, $b_2$, $c$ and $t$ be complex number such that $\Re (c) \ne 0, -1, -2, \dots$. Then we define the discrete Appell hypergeometric  function $\mathcal{F}^{(2)}_3$ as follow: 
%\begin{align}
%	\mathcal{F}^{(2)}_3 & =	\mathcal{F}^{(2)}_3(a_1, a_2, b_1, b_2; c; t, k, x, y)\nonumber\\
%	& = \sum_{m, n \geq 0} \frac{(a_1)_{m} \, (a_2)_n \, (b_1)_m \, (b_2)_n \, (-1)^{(m + n) k} \, (-t)_{(m + n)k}}{(c)_{m+n} \, m! \, n!} \ x^m \, y^n.
%\end{align} 
For different values of $k$, the discrete Appell function $\mathcal{F}^{(2)}_3$ reduces into other functions. In particular, for $k = 0$, we have
\begin{align}
	\mathcal{F}^{(2)}_3(a_1, a_2, b_1, b_2; c; t, 0, x, y)   = F_3 (a_1, a_2, b_1, b_2; c; x, y).
\end{align}
For $k = 1$, we get
\begin{align}
	\mathcal{F}^{(2)}_3(a_1, a_2, b_1, b_2; c; t, 1, x, y)
	%\nonumber\\ 	& = \sum_{m, n \geq 0} \frac{(a_1)_{m} \, (a_2)_n \, (b_1)_m \, (b_2)_n \, (-1)^{(m + n)} \, (-t)_{m + n}}{ (c)_{m+n} \, m! \, n!} \ x^m \, y^n\nonumber\\
	 = F_{{1}:{0}, {0}} ^{{1}:{2}, {2}}\left(\begin{array}{ccc}
		-t: & a_1, b_1; & a_2,  b_2\\
		c: & - ; & - 
	\end{array}; x, \, y\right).
\end{align}

The limiting cases of discrete Appell function $\mathcal{F}^{(2)}_3$ give the discrete Humbert functions. We denote them by $\varXi^{(2)}_1$ and $\varXi^{(2)}_2$, defined as
\begin{align}
	&\varXi^{(2)}_1 \left(a_1, a_2, b_1; c; t, k, x,  y\right) \nonumber\\
	& = \sum_{m,n \geq 0} \frac{(a_1)_{m} \, (a_2)_n \, (b_1)_m \, (-1)^{(m + n) k} \, (-t)_{(m + n)k}}{ (c)_{m+n} \, m! \, n!} \ x^m \, y^n; 
	\\[5pt]
&	\varXi^{(2)}_2 \left(a_1, b_1; c; t, k, x,  y\right)\nonumber\\
& = \sum_{m,n \geq 0} \frac{(a_1)_{m} \, (b_1)_m \, (-1)^{(m + n) k} \, (-t)_{(m + n)k}}{(c)_{m+n} \, m! \, n!} \ x^m \, y^n.
\end{align}
One can easily verify that
\begin{align}
	& \lim_{\varepsilon \to 0}  \mathcal{F}^{(2)}_3 \left(a_1, a_2, b_1, \frac{1}{\varepsilon}; c; t, k, x, \varepsilon \, y\right)  = \varXi^{(2)}_1 \left(a_1, a_2, b_1; c; t, k, x,  y\right)\nonumber\\
	&\lim_{\varepsilon \to 0}  \mathcal{F}^{(2)}_3 \left(a_1, \frac{1}{\varepsilon}, b_1,\frac{1}{\varepsilon}; c; t, k, x, \varepsilon^2 \, y\right)  = \varXi^{(2)}_2 \left(a_1, b_1; c; t, k, x,  y\right).
\end{align}
The following difference-differential equation are satisfied by discrete Appell function $\mathcal{F}^{(2)}_3$
\begin{align}
	\left[\theta \left(\frac{1}{k} \Theta_{t}  + c - 1\right) -   (-1)^k \, (-t)_k \, x \, \rho_t^k \,  \left(a_1 + \theta\right) \, \left(b_1 + \theta\right) \right] \mathcal{F}^{(2)}_3 = 0\nonumber
	\\[5pt]
	\left[\phi \, \left(\frac{1}{k} \Theta_{t} + c - 1\right) -  (-1)^k \, (-t)_k \, y \, \rho_t^k \,  \left(a_2 + \phi\right) \, \left(b_2 + \phi \right) \right] \mathcal{F}^{(2)}_3 = 0.
\end{align}	    
%These two equations together yield
%\begin{align}
%	(-t_1)_k \, x \, \frac{\left(\frac{1}{k} \Theta_{t_1} + a_1\right) \, \left(\frac{1}{k} \Theta_{t_1}  + b_1\right)}{\Theta_{t_1}} \, \mathcal{F}^{(1)}_3 = (-t_2)_k \, y  \,  \frac{\left(\frac{1}{k} \Theta_{t_2} + a_2\right) \, \left(\frac{1}{k} \Theta_{t_2}  + b_2\right)}{\Theta_{t_2}} \, \mathcal{F}^{(1)}_3
%\end{align} 
%or
%\begin{align}
%	& [(-t_2)_k \, y \, \Theta_{t_1} \, \left(\frac{1}{k} \Theta_{t_2} + a_2\right) \left(\frac{1}{k} \Theta_{t_2}  + b_2 \right)\nonumber\\
%	& \quad - (-t_1)_k \, x \,  \Theta_{t_2} \, \left(\frac{1}{k} \Theta_{t_1} + a_1\right) \, \left(\frac{1}{k} \Theta_{t_1}  + b_1\right)] \,  \mathcal{F}^{(1)}_3 = 0. 
%\end{align}

\subsection{Integral representations}
\begin{theorem}
	Let $a_1$, $a_2$, $b_1$, $b_2$, $c$ and $t$ be complex numbers. Then for $\vert x\vert < 1$, $\vert y\vert < 1$, the discrete Appell function $\mathcal{F}^{(2)}_3$ can be represented in the integral forms as
	\begin{align}
		& \mathcal{F}^{(2)}_3(a_1, a_2, b_1, b_2; c; t, k, x, y)\nonumber\\
		& =\Gamma \left(\begin{array}{c}
			c\\
			b_1, b_2, c - b_1 - b_2
		\end{array}\right) \iint u^{b_1 - 1} v^{b_2 - 1} (1-u-v)^{c -  b_1 - b_2 - 1} \nonumber\\
		& \quad \times F_{{0}:{0}; {0}} ^{{k}:{ 1}; {2}}\left(\begin{array}{ccc}
			\frac{- t}{k}, \dots, \frac{- t + k - 1}{k} : & a_1 ; & a_2\\
			-: & - ; & - 
		\end{array}; (-k)^k \, u \, x,  (-k)^k \, v \, y\right) \, du \, dv,\nonumber\\
	& \qquad \qquad \qquad	u\geq 0, \, v\geq 0, \, 1-u-v\geq 0;\nonumber\\
		& = \frac{1}{\Gamma (a_1)} \int_{0}^{\infty} e^{-u} \, u^{a_1 - 1} \nonumber\\
		& \quad \times F_{{1}:{0}; {0}} ^{{k}:{ 1}; {2}}\left(\begin{array}{ccc}
			\frac{- t}{k}, \dots, \frac{- t + k - 1}{k} : & b_1 ; & a_2,  b_2\\
			c: & - ; & - 
		\end{array}; (-k)^k \, u x,  (-k)^k \, y\right) du;\nonumber\\
		& = \frac{1}{\Gamma (a_2)} \int_{0}^{\infty} e^{-v} \, v^{a_2 - 1} \nonumber\\
		& \quad \times F_{{1}:{0}; {0}} ^{{k}:{ 2}; { 1}}\left(\begin{array}{ccc}
		\frac{- t}{k}, \dots, \frac{- t + k - 1}{k}: & a_1, b_1; &  b_2\\
			c: & - ; & - 
		\end{array}; (-k)^k \,  x,  (-k)^k \, v y\right) dv;\nonumber\\
		& = \frac{1}{\Gamma (b_1)} \int_{0}^{\infty} e^{-u} \, u^{b_1 - 1} \nonumber\\
		& \quad \times F_{{1}:{0}; {0}} ^{{k}:{1}; {2}}\left(\begin{array}{ccc}
			\frac{- t}{k}, \dots, \frac{- t + k - 1}{k}: & a_1; & a_2, b_2\\
			c: & - ; & - 
		\end{array}; (-k)^k \, u x,  (-k)^k \, y\right) du;\nonumber\\
		& = \frac{1}{\Gamma (b_2)} \int_{0}^{\infty} e^{-v} \, v^{b_2 - 1} \nonumber\\
		& \quad \times F_{{1}:{0}; {0}} ^{{k}:{2}; { 1}}\left(\begin{array}{ccc}
			\frac{- t}{k}, \dots, \frac{- t + k - 1}{k}: & a_1, b_1; & a_2\\
			c: & - ; & - 
		\end{array}; (-k)^k \, x,  (-k)^k \, v y\right) dv.
	\end{align}
\end{theorem}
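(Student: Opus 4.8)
The plan is to follow verbatim the method used for $\mathcal{F}_3^{(1)}$ in Theorem~\ref{t7}, since the only structural change is that the two $k$-shifted factors $(-t_1)_{mk_1}$ and $(-t_2)_{nk_2}$ are here replaced by the single coupled factor $(-t)_{(m+n)k}$. Throughout, the interchange of summation and integration is legitimate because the defining series \eqref{3.2} converges absolutely for $|x|<1$, $|y|<1$ (by the same Stirling estimate carried out for the first form), so Fubini's theorem applies both on the simplex $\{u\ge 0,\,v\ge 0,\,1-u-v\ge 0\}$ and on the half-line $[0,\infty)$.

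For the double-integral representation I would insert the Dirichlet--Beta identity
\[
\frac{(b_1)_m\,(b_2)_n}{(c)_{m+n}} = \Gamma\!\left(\begin{array}{c} c\\ b_1,b_2,c-b_1-b_2 \end{array}\right) \iint u^{b_1+m-1}\,v^{b_2+n-1}\,(1-u-v)^{c-b_1-b_2-1}\,du\,dv
\]
into \eqref{3.2}, pull the factor $u^{b_1-1}v^{b_2-1}(1-u-v)^{c-b_1-b_2-1}$ out of the sum, and be left with the residual double series in the variables $ux$ and $vy$. The decisive step is then to rewrite the coupled factor via the multiplication formula following \eqref{3.2}:
\[
(-1)^{(m+n)k}\,(-t)_{(m+n)k} = \bigl((-k)^k\bigr)^{m+n}\,\prod_{i=0}^{k-1}\left(\frac{-t+i}{k}\right)_{m+n},
\]
which turns the $(m+n)$-indexed factor into $k$ Pochhammer symbols $\bigl(\tfrac{-t+i}{k}\bigr)_{m+n}$ and absorbs the constant $(-k)^k$ into each variable. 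Since $(c)_{m+n}$ has been consumed by the Beta integral, the remaining sum is exactly a Kamp\'e de F\'eriet function with $k$ numerator parameters of $(m+n)$-type, one $m$-type parameter $a_1$ and one $n$-type parameter $a_2$, and empty denominator rows, evaluated at $(-k)^k\,ux$ and $(-k)^k\,vy$, giving the stated form.

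For each of the four single-integral representations I would instead isolate one of $(a_1)_m$, $(a_2)_n$, $(b_1)_m$, $(b_2)_n$ and replace it using Euler's integral $(\lambda)_m = \tfrac{1}{\Gamma(\lambda)}\int_0^\infty e^{-u}\,u^{\lambda+m-1}\,du$, exactly as in \eqref{3.9}. After interchanging and extracting $e^{-u}u^{\lambda-1}$, the same multiplication-formula rewriting identifies the residual double series as the appropriate $F_{1:0;0}^{k:\,\cdot\,;\,\cdot}$ (the denominator $(c)_{m+n}$ now survives, producing the subscript $1:0;0$), with the superscripts recording how the three surviving numerator parameters split into $m$-type and $n$-type. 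I expect no analytic obstacle here: convergence and Fubini are already in hand, so the only real work is the bookkeeping of which parameters remain $m$-indexed, $n$-indexed or $(m+n)$-indexed, and checking that the constant $(-k)^k$ is correctly routed into the argument carrying the integration variable.
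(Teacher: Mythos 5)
Your proposal is correct and matches the paper's approach exactly: the paper states this theorem without proof, deferring to the method of Theorem~\ref{t7}, which is precisely the Dirichlet--Beta insertion for the double integral and Euler's Gamma integral for the single integrals, combined with the multiplication formula to split $(-t)_{(m+n)k}$ into $k$ Pochhammer factors of $(m+n)$-type. Your bookkeeping of the parameter types and of the constant $(-k)^k$ is also consistent with what the stated Kamp\'e de F\'eriet forms require.
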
	
\subsection{Differential formulae}
The following  differential formulae are satisfied by discrete Appell function $\mathcal{F}^{(2)}_3$:
\begin{align}
%	&	(\Delta_{t})^r \mathcal{F}^{(2)}_3(a_1, a_2, b_1, b_2; c; t, k, x, y) \nonumber\\
%	& = \frac{(a_1)_r \, (b_1)_r \, k^r \, x^r \, (-1)^{r(k - 1)} (-t)_{r (k - 1)}}{(c)_r}   \mathcal{F}^{(2)}_3(a_1 + r, a_2, b_1 + r, b_2; c + r; t - r(k - 1), k, x, y) \nonumber\\
%	& \quad + \frac{(a_2)_r \, (b_2)_r \,  k^r \, y^r \, (-1)^{r(k - 1)} (-t_2)_{r (k - 1)}}{(c)_r}   \mathcal{F}^{(1)}_3(a_1, a_2 + r, b_1, b_2 + r; c + r; t - r (k -1), k, x, y);\nonumber\\
%	& (\Theta_{t_1})^r \mathcal{F}^{(2)}_3(a_1, a_2, b_1, b_2; c; t, k, x, y) \nonumber\\
%	& = \frac{(-1)^{rk} \, (a_1)_r \, (b_1)_r \, (-t)_{rk} \, x^r \, k^r}{(c)_r} \, \mathcal{F}^{(2)}_3(a_1 + r, a_2, b_1 + r, b_2; c + r; t - rk, k, x, y);\\
%	& (\Theta_{t_2})^r \mathcal{F}^{(2)}_3(a_1, a_2, b_1, b_2; c; t, k, x, y) \nonumber\\
%	& = \frac{(-1)^{rk} \, (a_2)_r \, (b_2)_r \, (-t)_{rk} \, y^r \, k^r}{(c)_r} \, \mathcal{F}^{(2)}_3(a_1, a_2 + r, b_1, b_2 + r; c + r; t - rk, k, x, y);\\
	& (\theta)^r \mathcal{F}^{(2)}_3(a_1, a_2, b_1, b_2; c; t, k, x, y) \nonumber\\
	& = \frac{(-1)^{rk} \, (a_1)_r \, (b_1)_r \, (-t)_{rk} \, x^r}{(c)_r} \, \mathcal{F}^{(2)}_3(a_1 + r, a_2, b_1 + r, b_2; c + r; t - rk, k, x, y);\\
	& (\phi)^r \mathcal{F}^{(2)}_3(a_1, a_2, b_1, b_2; c; t, k, x, y) \nonumber\\
	& = \frac{(-1)^{rk} \, (a_2)_r \, (b_2)_r \, (-t)_{rk} \, y^r}{(c)_r} \, \mathcal{F}^{(2)}_3(a_1, a_2 + r, b_1, b_2 + r; c + r; t - rk, k, x, y);\nonumber\\
		& \left(\frac{\partial}{\partial x}\right)^r \left[x^{b_1 + r - 1} \mathcal{F}^{(2)}_3(a_1, a_2, b_1, b_2; c; t, k, x, y)\right]\nonumber\\
	& = x^{b_1 - 1} \, (b_1)_r \mathcal{F}^{(2)}_3(a_1, a_2, b_1 + r, b_2; c; t, k, x, y);\\
	& \left(\frac{\partial}{\partial y}\right)^r [y^{b_2 + r - 1} \mathcal{F}^{(2)}_3(a_1, a_2, b_1, b_2; c; t, k, x, y)]\nonumber\\
	& = y^{b_2 - 1} \, (b_2)_r \mathcal{F}^{(2)}_3(a_1, a_2, b_1, b_2 + r; c; t, k, x, y);\\
	& \left(\frac{\partial}{\partial x}\right)^r [x^{a_1 + r - 1} \mathcal{F}^{(2)}_3(a_1, a_2, b_1, b_2; c; t, k, x, y)]\nonumber\\
	& = x^{a_1 - 1} \, (a_1)_r \mathcal{F}^{(2)}_3(a_1 + r, a_2, b_1, b_2; c; t, k, x, y);\\
	& \left(\frac{\partial}{\partial y}\right)^r [y^{a_2 + r - 1} \mathcal{F}^{(2)}_3(a_1, a_2, b_1, b_2; c; t, k, x, y)]\nonumber\\
	& = y^{a_2 - 1} \, (a_2)_r \mathcal{F}^{(2)}_3(a_1, a_2 + r, b_1, b_2; c; t, k, x, y);\\
	& \left(\frac{\partial}{\partial x}\right)^r [x^{c - 1} \mathcal{F}^{(2)}_3(a_1, a_2, b_1, b_2; c; t, k, x, xy)]\nonumber\\
	& = (-1)^r \, (1 - c)_r \, x^{c - r - 1} \mathcal{F}^{(2)}_3(a_1, a_2, b_1, b_2; c - r; t, k, x, xy);\\
	& \left(\frac{\partial}{\partial y}\right)^r [y^{c - 1} \mathcal{F}^{(2)}_3(a_1, a_2, b_1, b_2; c; t, k, xy, y)]\nonumber\\
	& = (-1)^r \, y^{c - r - 1} \, (1 - c)_r \mathcal{F}^{(2)}_3(a_1, a_2, b_1, b_2; c - r; t, k, xy, y).
\end{align}
\subsection{Finite and infinite sums}
Next, we state some finite and infinite sums in terms of discrete Appell function $\mathcal{F}^{(2)}_3$ in the following theorem.
\begin{theorem}
	\begin{align}
		& \mathcal{F}^{(2)}_3(a_1, a_2, b_1 + r, b_2; c; t, k, x, y)\nonumber\\
		& = \sum_{s = 0}^{r} {r \choose s} \frac{(a_1)_s \, (-1)^{sk} \, (-t)_{sk}}{(c)_s} \, x^s \, \mathcal{F}^{(2)}_3(a_1 + s, a_2, b_1 + s, b_2; c + s; t - sk, k, x, y);\\
		& \mathcal{F}^{(2)}_3(a_1, a_2, b_1, b_2 + r; c; t, k, x, y)\nonumber\\
		& = \sum_{s = 0}^{r} {r \choose s} \frac{(a_2)_s \, (-1)^{sk} \, (-t)_{sk}}{(c)_s} \, y^s \, \mathcal{F}^{(2)}_3(a_1, a_2 + s, b_1, b_2 + s; c + s; t - sk, k, x, y).
	\end{align}
\end{theorem}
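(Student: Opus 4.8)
The plan is to follow verbatim the strategy that established the finite summation formulae \eqref{5.1} and \eqref{5.2} for $\mathcal{F}^{(1)}_3$: apply the Leibnitz rule to an $r$-fold $x$-derivative of $x^{b_1+r-1}\,\mathcal{F}^{(2)}_3$ and match the two sides against the differential formulae already recorded for $\mathcal{F}^{(2)}_3$. Since the two asserted identities are mirror images of each other under the interchange $(x,b_1,a_1,\theta)\leftrightarrow(y,b_2,a_2,\phi)$, I would prove the first in full and obtain the second by this symmetry.

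The key preliminary step, and the one I expect to be the genuine obstacle, is the single-parameter derivative formula
\begin{align}
\left(\frac{\partial}{\partial x}\right)^s \mathcal{F}^{(2)}_3(a_1,a_2,b_1,b_2;c;t,k,x,y) = \frac{(a_1)_s\,(b_1)_s\,(-1)^{sk}\,(-t)_{sk}}{(c)_s}\,\mathcal{F}^{(2)}_3(a_1+s,a_2,b_1+s,b_2;c+s;t-sk,k,x,y).\nonumber
\end{align}
Differentiating \eqref{3.2} termwise shifts the summation index $m\mapsto m+s$, turning $(a_1)_m,(b_1)_m$ into $(a_1)_{m+s},(b_1)_{m+s}$ and $(c)_{m+n}$ into $(c)_{m+s+n}$; the delicate factor is $(-t)_{(m+s+n)k}$. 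Writing $M=m+n$ and using the product representation $(-t)_{Mk}=k^{Mk}\prod_{i=0}^{k-1}\bigl(\tfrac{-t+i}{k}\bigr)_M$, the elementary Pochhammer split $\bigl(\tfrac{-t+i}{k}\bigr)_{M+s}=\bigl(\tfrac{-t+i}{k}\bigr)_s\bigl(\tfrac{-(t-sk)+i}{k}\bigr)_M$ yields $(-t)_{(M+s)k}=(-t)_{sk}\,(-t+sk)_{Mk}$, which is exactly what produces the simultaneous parameter shift $t\mapsto t-sk$. This is more subtle here than in the $\mathcal{F}^{(1)}_3$ case because $\mathcal{F}^{(2)}_3$ carries a single symbol $(-t)_{(m+n)k}$ tied to the combined index $m+n$ rather than two independent symbols $(-t_1)_{mk_1},(-t_2)_{nk_2}$, so the split must be performed on $M=m+n$ and the resulting sum over $n$ reorganized; after collecting the constants $(a_1)_s(b_1)_s(-1)^{sk}(-t)_{sk}/(c)_s$ the remaining series is precisely $\mathcal{F}^{(2)}_3$ with the shifted parameters.

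With this in hand I would expand $\bigl(\tfrac{\partial}{\partial x}\bigr)^r\!\bigl[x^{b_1+r-1}\mathcal{F}^{(2)}_3\bigr]$ by the Leibnitz rule as $\sum_{s=0}^r\binom{r}{s}\bigl[\bigl(\tfrac{\partial}{\partial x}\bigr)^{r-s}x^{b_1+r-1}\bigr]\bigl[\bigl(\tfrac{\partial}{\partial x}\bigr)^s\mathcal{F}^{(2)}_3\bigr]$, substitute $\bigl(\tfrac{\partial}{\partial x}\bigr)^{r-s}x^{b_1+r-1}=(b_1+s)_{r-s}\,x^{b_1+s-1}$ together with the derivative formula above, and use $(b_1)_s(b_1+s)_{r-s}=(b_1)_r$ to factor a common $(b_1)_r$. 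Comparing this with the independent evaluation of the same left-hand side through the listed differential formula $\bigl(\tfrac{\partial}{\partial x}\bigr)^r[x^{b_1+r-1}\mathcal{F}^{(2)}_3]=x^{b_1-1}(b_1)_r\,\mathcal{F}^{(2)}_3(a_1,a_2,b_1+r,b_2;c;t,k,x,y)$, and cancelling the common factor $x^{b_1-1}(b_1)_r$, yields the first identity. The second identity follows by repeating the argument with $\partial/\partial y$, $y^{b_2+r-1}$ and the index $n$, every other step being identical. Apart from the Pochhammer bookkeeping singled out above, all manipulations are routine rearrangements of absolutely convergent series, justified for $|x|<1$, $|y|<1$ by the convergence established in Section~2.
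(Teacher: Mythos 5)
Your proposal is correct and follows essentially the same route as the paper, which omits the proof of this theorem on the grounds that it is identical in method to the Leibnitz-rule argument given for the $\mathcal{F}^{(1)}_3$ analogues \eqref{5.1}--\eqref{5.2}. Your extra care in splitting $(-t)_{(m+n+s)k}=(-t)_{sk}\,(-t+sk)_{(m+n)k}$ via the product representation is exactly the point at which the single combined Pochhammer index of $\mathcal{F}^{(2)}_3$ differs from the two independent ones of $\mathcal{F}^{(1)}_3$, and it is handled correctly.
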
  
We also have  an infinite summation formula, that can be regarded as a transformation formula, in terms of discrete Appell function $\mathcal{F}^{(2)}_3$:
\begin{align}
	&\sum_{r = 0}^{\infty} \frac{(a_1)_r}{r !} \, z^r \, \mathcal{F}^{(2)}_3 (a_1 + r, a_2, b_1, b_2; c ; t, k, x, y)\nonumber\\
	& = (1 - z)^{-a_1} \, \mathcal{F}^{(2)}_3 \left(a_1, a_2, b_1, b_2; c; t, k, \frac{x}{1 - z}, y\right). 
\end{align}
\subsection{Recursion Formulae}
We have the following recursion formulae
\begin{align}
	& \mathcal{F}^{(2)}_3 (a_1 + s, a_2, b_1, b_2; c; t, k, x, y) \nonumber\\
	& = \mathcal{F}^{(2)}_3 (a_1, a_2, b_1, b_2; c ; t, k, x, y)\nonumber\\
	& \quad + \frac{(-1)^k \, (-t)_k \, b_1 \, x}{c}  \sum_{r = 1}^{s} \mathcal{F}^{(2)}_3 (a_1 + r, a_2, b_1 + 1, b_2; c + 1; t - k, k, x, y);\\
	& \mathcal{F}^{(2)}_3 (a_1 - s, a_2, b_1, b_2; c ; t, k, x, y) \nonumber\\
	& = \mathcal{F}^{(2)}_3 (a_1, a_2, b_1, b_2; c ; t, k, x, y)\nonumber\\
	& \quad - \frac{(-1)^k \, (-t)_k \, b_1 \, x}{c}  \sum_{r = 0}^{s - 1} \mathcal{F}^{(2)}_3 (a_1 - r, a_2, b_1 + 1, b_2; c + 1; t - k, k, x, y);\\
	& \mathcal{F}^{(2)}_3 (a_1, a_2 + s, b_1, b_2; c; t, k, x, y) \nonumber\\
	& = \mathcal{F}^{(2)}_3 (a_1, a_2, b_1, b_2; c ; t, k, x, y)\nonumber\\
	& \quad + \frac{(-1)^k \, (-t)_k \, b_2 \, y}{c} \sum_{r = 1}^{s} \mathcal{F}^{(2)}_3 (a_1, a_2 + r, b_1, b_2 + 1; c + 1; t - k, k, x, y);\\
	& \mathcal{F}^{(2)}_3 (a_1, a_2 - s, b_1, b_2; c ; t, k, x, y) \nonumber\\
	& = \mathcal{F}^{(2)}_3 (a_1, a_2, b_1, b_2; c ; t, k, x, y)\nonumber\\
	& \quad - \frac{(-1)^k \, (-t)_k \, b_2 \, y}{c}  \sum_{r = 0}^{s - 1} \mathcal{F}^{(2)}_3 (a_1, a_2 - r, b_1, b_2 + 1; c + 1; t - k, k, x, y);\\
	& \mathcal{F}^{(2)}_3 (a_1, a_2, b_1 + s, b_2; c ; t, k, x, y) \nonumber\\
	& = \mathcal{F}^{(2)}_3 (a_1, a_2, b_1, b_2; c ; t, k, x, y)\nonumber\\
	& \quad + \frac{(-1)^k \, (-t)_k \, a \, x}{c}  \sum_{r = 1}^{s} \mathcal{F}^{(2)}_3 (a_1 + 1, a_2, b_1 + r, b_2; c + 1; t - k, k, x, y);\\
	& \mathcal{F}^{(2)}_3 (a_1, a_2, b_1 - s, b_2; c ; t, k, x, y) \nonumber\\
	& = \mathcal{F}^{(2)}_3 (a_1, a_2, b_1, b_2; c ; t, k, x, y)\nonumber\\
	& \quad - \frac{(-1)^k \, (-t)_k \, a_1 \, x}{c}  \sum_{r = 0}^{s - 1} \mathcal{F}^{(2)}_3 (a_1 + 1, b_1 - r, b_2; c + 1; t - k, k, x, y);\\
	& \mathcal{F}^{(2)}_3 (a_1, a_2, b_1, b_2; c - s; t, k, x, y) \nonumber\\
	& = \mathcal{F}^{(2)}_3 (a_1, a_2, b_1, b_2; c ; t, k, x, y)\nonumber\\
	& \quad + (-1)^k \, (-t)_k \, a_1 \, b_1 \, x  \sum_{r = 1}^{s} \frac{\mathcal{F}^{(2)}_3 (a_1 + 1, a_2, b_1 + 1, b_2; c + 2 - r; t - k, k, x, y)}{(c - r) \, (c - r + 1)}\nonumber\\
	& \quad + (-1)^k \, (-t)_k \, a_2 \, b_2 \, y  \sum_{r = 1}^{s} \frac{\mathcal{F}^{(2)}_3 (a_1, a_2 + 1, b_1, b_2 + 1; c + 2 - r; t - k, k, x, y)}{(c - r) \, (c - r + 1)}.
\end{align}
Besides, the following relations can be easily verified:
\begin{align}
	&	a_1 \, \mathcal{F}^{(2)}_3 (a_1 + 1)  = (a_1 + \theta) \, \mathcal{F}^{(2)}_3;\\
	& (a_1 + \theta - 1) \, \mathcal{F}^{(2)}_3 (a_1 - 1)  =	(a_1 - 1) \, \mathcal{F}^{(2)}_3;\\
	&	a_2 \, \mathcal{F}^{(2)}_3 (a_2 + 1)  = (a_2 + \phi) \, \mathcal{F}^{(2)}_3;\\
	& (a_2 + \phi - 1) \, \mathcal{F}^{(2)}_3 (a_2 - 1)  =	(a_2 - 1) \, \mathcal{F}^{(2)}_3;\\
	&	b_1 \, \mathcal{F}^{(2)}_3 (b_1 + 1)  = (b_1 + \theta) \, \mathcal{F}^{(2)}_3;\\
	&(b_1 + \theta - 1) \, \mathcal{F}^{(2)}_3 (b_1 - 1)  =	(b_1 - 1) \, \mathcal{F}^{(2)}_3;\\
	&	b_2 \, \mathcal{F}^{(2)}_3 (b_2 + 1)  = (b_2 + \phi) \, \mathcal{F}^{(2)}_3;\\
	& (b_2  + \phi - 1) \, \mathcal{F}^{(2)}_3 (b_2 - 1)  =	(b_2 - 1) \, \mathcal{F}^{(2)}_3;\\
	&	(c - 1) \, \mathcal{F}^{(1)}_3 (c - 1)  = (c + \theta + \phi - 1) \, \mathcal{F}^{(1)}_3;\\
	& (c + \theta + \phi) \, \mathcal{F}^{(1)}_3 (c + 1)  =	c \, \mathcal{F}^{(1)}_3.
\end{align}
On combining any two of the above relations, we get a first order or second order differential recursion relations as:
\begin{align}
	& a_1 \, (a_1 - 1) \, \mathcal{F}^{(2)}_3 (a_1 + 1) - (a_1 + \theta) \, (a_1 + \theta - 1) \mathcal{F}^{(2)}_3 (a_1 - 1) = 0;\\
	& a_1 \, (b_1 - 1) \, \mathcal{F}^{(2)}_3 (a_1 + 1) - (a_1 + \theta) \, (b_1 + \theta  - 1) \mathcal{F}^{(2)}_3 (b_1 - 1) = 0;\\
	& a_1 \, (b_2 - 1) \, \mathcal{F}^{(2)}_3 (a_1 + 1) - (a_1 + \theta) \, (b_2 + \phi - 1) \mathcal{F}^{(2)}_3 (b_2 - 1) = 0;\\
	& a_1 \, c \, \mathcal{F}^{(2)}_3 (a_1 + 1) - (a_1 + \theta) \, (c + \theta + \phi) \mathcal{F}^{(2)}_3 (c + 1) = 0;\\
	& a_1 \, (b_1 + \theta) \, \mathcal{F}^{(2)}_3 (a_1 + 1) - b_1 \, (a_1 + \theta) \, \mathcal{F}^{(2)}_3 (b_1 + 1) = 0;\\
	& a_1 \, (b_2 + \phi) \, \mathcal{F}^{(2)}_3 (a_1 + 1) - b_2 \, (a_1 + \theta) \, \mathcal{F}^{(2)}_3 (b_2 + 1) = 0;\\
	& a_1 \, (c + \theta + \phi - 1) \, \mathcal{F}^{(2)}_3 (a_1 + 1) - (c - 1) \, (a_1 + \theta) \, \mathcal{F}^{(2)}_3 (c - 1) = 0;\\	
	& a_1 \, (a_2 + \phi) \, \mathcal{F}^{(2)}_3 (a_1 + 1) - a_2 \, (a_1 + \theta) \, \mathcal{F}^{(2)}_3 (a_2 + 1) = 0;\\
	& a_1 \, (a_2 - 1) \, \mathcal{F}^{(2)}_3 (a_1 + 1) - (a_1 + \theta) \, (a_2 + \phi - 1) \, \mathcal{F}^{(2)}_3 (a_2 - 1) = 0;\\
	& (a_1 + \theta - 1) \, (a_2 + \phi) \, \mathcal{F}^{(2)}_3 (a_1 - 1) - a_2 \, (a_1 - 1) \mathcal{F}^{(2)}_3 (a_2 + 1) = 0;\\
	& (a_2 - 1) \, (a_1 + \theta - 1) \, \mathcal{F}^{(2)}_3 (a_1 - 1) - (a_1 - 1) \, (a_2 + \phi  - 1) \mathcal{F}^{(2)}_3 (a_2 - 1) = 0;\\
	& a_2 \, (a_2 - 1) \, \mathcal{F}^{(2)}_3 (a_2 + 1) - (a_2 + \phi) \, (a_2 + \phi - 1) \mathcal{F}^{(2)}_3 (a_2 - 1) = 0;\\
	& a_2 \, (b_1 - 1) \, \mathcal{F}^{(2)}_3 (a_2 + 1) - (a_2 + \phi) \, (b_1 + \theta  - 1) \mathcal{F}^{(2)}_3 (b_1 - 1) = 0;\\
	& a_2 \, (b_2 - 1) \, \mathcal{F}^{(2)}_3 (a_2 + 1) - (a_2 + \phi) \, (b_2 + \phi - 1) \mathcal{F}^{(2)}_3 (b_2 - 1) = 0;\\
	& a_2 \, c \, \mathcal{F}^{(2)}_3 (a_2 + 1) - (a_2 + \phi) \, (c + \theta + \phi) \mathcal{F}^{(2)}_3 (c + 1) = 0;\\
	& a_2 \, (b_1 + \theta) \, \mathcal{F}^{(2)}_3 (a_2 + 1) - b_1 \, (a_2 + \phi) \, \mathcal{F}^{(2)}_3 (b_1 + 1) = 0;\\
	& a_2 \, (b_2 + \phi) \, \mathcal{F}^{(2)}_3 (a_2 + 1) - b_2 \, (a_2 + \phi) \, \mathcal{F}^{(2)}_3 (b_2 + 1) = 0;\\
	& a_2 \, (c + \theta + \phi - 1) \, \mathcal{F}^{(2)}_3 (a_2 + 1) - (c - 1) \, (a_2 + \phi) \, \mathcal{F}^{(2)}_3 (c - 1) = 0;\\
	&(a_1 + \theta - 1) \, (b_1 + \theta) \, \mathcal{F}^{(2)}_3 (a_1 - 1) - b_1 \, (a_1 - 1)  \, \mathcal{F}^{(2)}_3 (b_1 + 1) = 0;\\
	&	(a_1 + \theta - 1) \, (b_2 + \phi) \, \mathcal{F}^{(2)}_3 (a_1 - 1) - b_2 \, (a_1 - 1)  \, \mathcal{F}^{(2)}_3 (b_2 + 1) = 0;\\
	&	(a_1 + \theta - 1) \, (c + \theta + \phi - 1) \, \mathcal{F}^{(2)}_3 (a_1 - 1) - (c - 1) \, (a_1 - 1)  \, \mathcal{F}^{(2)}_3 (c - 1) = 0;\\
	& (b_1 - 1) \,	(a_1 + \theta - 1) \,  \mathcal{F}^{(2)}_3 (a_1 - 1) -  (a_1 - 1)  \, (b_1 + \theta - 1) \mathcal{F}^{(2)}_3 (b_1 - 1) = 0;\\
	& (b_2 - 1) \,	(a_1 + \theta - 1) \,  \mathcal{F}^{(2)}_3 (a_1 - 1) -  (a_1 - 1)  \, (b_2 + \phi - 1) \mathcal{F}^{(2)}_3 (b_2 - 1) = 0;\\
	& c \,	(a_1 + \theta - 1) \,  \mathcal{F}^{(2)}_3 (a_1 - 1) -  (a_1 - 1)  \, (c + \theta + \phi) \mathcal{F}^{(2)}_3 (c + 1) = 0;\\
	&(a_2 + \phi - 1) \, (b_1 + \theta) \, \mathcal{F}^{(2)}_3 (a_2 - 1) - b_1 \, (a_2 - 1)  \, \mathcal{F}^{(2)}_3 (b_1 + 1) = 0;\\
	&	(a_2 + \phi - 1) \, (b_2 + \phi) \, \mathcal{F}^{(2)}_3 (a_2 - 1) - b_2 \, (a_2 - 1)  \, \mathcal{F}^{(2)}_3 (b_2 + 1) = 0;\\
	&	(a_2 + \phi - 1) \, (c + \theta + \phi - 1) \, \mathcal{F}^{(2)}_3 (a_2 - 1) - (c - 1) \, (a_2 - 1)  \, \mathcal{F}^{(2)}_3 (c - 1) = 0;\\
	& (b_1 - 1) \,	(a_2 + \phi - 1) \,  \mathcal{F}^{(2)}_3 (a_2 - 1) -  (a_2 - 1)  \, (b_1 + \theta - 1) \mathcal{F}^{(2)}_3 (b_1 - 1) = 0;\\
	& (b_2 - 1) \,	(a_2 + \phi - 1) \,  \mathcal{F}^{(2)}_3 (a_2 - 1) -  (a_2 - 1)  \, (b_2 + \phi - 1) \mathcal{F}^{(2)}_3 (b_2 - 1) = 0;\\
	& c \,	(a_2 + \phi - 1) \,  \mathcal{F}^{(2)}_3 (a_2 - 1) -  (a_2 - 1)  \, (c + \theta + \phi) \mathcal{F}^{(2)}_3 (c + 1) = 0;\\
	& b_1 \,	(b_1 - 1) \,  \mathcal{F}^{(2)}_3 (b_1 + 1) -  (b_1 + \theta)  \, (b_1 + \theta - 1) \mathcal{F}^{(2)}_3 (b_1 - 1) = 0;\\
	& b_1 \,	(b_2 + \phi) \,  \mathcal{F}^{(2)}_3 (b_1 + 1) -  b_2  \, (b_1 + \theta ) \mathcal{F}^{(2)}_3 (b_2 + 1) = 0;\\
	& b_1 \,	(b_2 - 1) \,  \mathcal{F}^{(2)}_3 (b_1 + 1) -  (b_1 + \theta)  \, (b_2 + \phi - 1) \mathcal{F}^{(2)}_3 (b_2 - 1) = 0;\\
	& b_1 \,	(c + \theta + \phi - 1) \,  \mathcal{F}^{(2)}_3 (b_1 + 1) -  (c - 1)  \, (b_1 + \theta) \mathcal{F}^{(2)}_3 (c - 1) = 0;\\
	& b_1 \,	c \,  \mathcal{F}^{(2)}_3 (b_1 + 1) -  (c + \theta +\phi)  \, (b_1 + \theta) \mathcal{F}^{(2)}_3 (c + 1) = 0;\\
	& b_2 \,	(b_1 - 1) \,  \mathcal{F}^{(2)}_3 (b_2 + 1) -  (b_2 + \phi)  \, (b_1 + \theta - 1) \mathcal{F}^{(2)}_3 (b_1 - 1) = 0;\\
	& b_2 \,	(b_2 - 1) \,  \mathcal{F}^{(2)}_3 (b_2 + 1) -  (b_2 + \phi)  \, (b_2 + \phi - 1) \mathcal{F}^{(2)}_3 (b_2 - 1) = 0;\\
	& b_2 \,	(c + \theta + \phi - 1) \,  \mathcal{F}^{(2)}_3 (b_2 + 1) -  (c - 1)  \, (b_2 + \phi) \mathcal{F}^{(2)}_3 (c - 1) = 0;\\
	& b_2 \,	c \,  \mathcal{F}^{(2)}_3 (b_2 + 1) -  (c + \theta +\phi)  \, (b_2 + \phi) \mathcal{F}^{(2)}_3 (c + 1) = 0;\\
	& (b_2 - 1) \,	(b_1 + \theta - 1) \,  \mathcal{F}^{(2)}_3 (b_1 - 1) -  (b_1 - 1)  \, (b_2 + \phi - 1) \mathcal{F}^{(2)}_3 (b_2 - 1) = 0;\\
	& (b_1 + \theta - 1) \,	(c + \theta + \phi - 1) \,  \mathcal{F}^{(2)}_3 (b_1 - 1) -  (c - 1)  \, (b_1 - 1) \mathcal{F}^{(2)}_3 (c - 1) = 0;\\
	& 	c \, (b_1 + \theta - 1) \, \mathcal{F}^{(2)}_3 (b_1 - 1) - (b_1 - 1)  (c + \theta +\phi)  \,  \mathcal{F}^{(2)}_3 (c + 1) = 0;\\
	& (b_2 + \phi - 1) \,	(c + \theta + \phi - 1) \,  \mathcal{F}^{(2)}_3 (b_2 - 1) -  (c - 1)  \, (b_2 - 1) \mathcal{F}^{(2)}_3 (c - 1) = 0;\\
	& 	c \, (b_2 + \phi - 1) \, \mathcal{F}^{(2)}_3 (b_2 - 1) - (b_2 - 1)  (c + \theta +\phi)  \,  \mathcal{F}^{(2)}_3 (c + 1) = 0;\\
	& 	c \, (c - 1) \, \mathcal{F}^{(2)}_3 (c - 1) - (c + \theta + \phi - 1)  (c + \theta +\phi)  \,  \mathcal{F}^{(2)}_3 (c + 1) = 0.
\end{align}
Similarly using  the difference-differential relations, 
\begin{align}
	&	a_1 \, \mathcal{F}^{(2)}_3 (a_1 + 1)  = \left(a_1 + \theta \right) \, \mathcal{F}^{(2)}_3;\\
	& \left(a_1 + \theta - 1\right) \, \mathcal{F}^{(2)}_3 (a_1 - 1)  =	(a_1 - 1) \, \mathcal{F}^{(2)}_3;\\
	&	a_2 \, \mathcal{F}^{(2)}_3 (a_2 + 1)  = \left(a_2 + \phi \right) \, \mathcal{F}^{(2)}_3;\\
	& \left(a_2 + \phi - 1\right) \, \mathcal{F}^{(2)}_3 (a_2 - 1)  =	(a_2 - 1) \, \mathcal{F}^{(2)}_3;\\
	&	b_1 \, \mathcal{F}^{(2)}_3 (b_1 + 1)  = \left(b_1 + \theta\right) \, \mathcal{F}^{(2)}_3;\\
	&\left(b_1 + \theta - 1\right) \, \mathcal{F}^{(2)}_3 (b_1 - 1)  =	(b_1 - 1) \, \mathcal{F}^{(2)}_3;\\
	&	b_2 \, \mathcal{F}^{(2)}_3 (b_2 + 1)  = \left(b_2 + \phi\right) \, \mathcal{F}^{(2)}_3;\\
	& \left(b_2 + \phi - 1\right) \, \mathcal{F}^{(2)}_3 (b_2 - 1)  =	(b_2 - 1) \, \mathcal{F}^{(2)}_3;\\
	&	(c - 1) \, \mathcal{F}^{(2)}_3 (c - 1)  = \left(c + \frac{1}{k} \Theta_{t}  - 1\right) \, \mathcal{F}^{(2)}_3;\\
	& \left(c + \frac{1}{k} \Theta_{t}\right) \, \mathcal{F}^{(2)}_3 (c + 1)  =	c \, \mathcal{F}^{(2)}_3,
\end{align}
one will get the following difference-differential recursion formulas obeyed by discrete Appell function $\mathcal{F}^{(2)}_3$. 
\begin{align}
%	& a_1 \, (a_1 - 1) \, \mathcal{F}^{(2)}_3 (a_1 + 1) - \left(a_1 + \theta\right) \, \left(a_1 + \theta  - 1\right) \mathcal{F}^{(2)}_3 (a_1 - 1) = 0;\\
%	& a_1 \, (b_1 - 1) \, \mathcal{F}^{(2)}_3 (a_1 + 1) - \left(a_1 + \theta \right) \, \left(b_1 + \theta  - 1\right) \mathcal{F}^{(2)}_3 (b_1 - 1) = 0;\\
%	& a_1 \, (b_2 - 1) \, \mathcal{F}^{(2)}_3 (a_1 + 1) - \left(a_1 + \theta\right) \, \left(b_2 + \phi - 1\right) \mathcal{F}^{(2)}_3 (b_2 - 1) = 0;\\
	& a_1 \, c \, \mathcal{F}^{(2)}_3 (a_1 + 1)  - \left(a_1 + \theta\right) \, \left(c + \frac{1}{k} \Theta_{t}\right) \mathcal{F}^{(2)}_3 (c + 1) = 0;\\
%	& a_1 \, \left(b_1 + \theta\right) \, \mathcal{F}^{(2)}_3 (a_1 + 1) - b_1 \, \left(a_1 + \theta\right) \, \mathcal{F}^{(2)}_3 (b_1 + 1) = 0;\\
%	& a_1 \, \left(b_2 + \phi\right) \, \mathcal{F}^{(2)}_3 (a_1 + 1) - b_2 \, \left(a_1 + \theta\right) \, \mathcal{F}^{(2)}_3 (b_2 + 1) = 0;\\
	& a_1 \, \left(c + \frac{1}{k} \Theta_{t} - 1\right) \, \mathcal{F}^{(2)}_3 (a_1 + 1)  - (c - 1) \, \left(a_1 + \theta\right) \, \mathcal{F}^{(2)}_3 (c - 1) = 0;\\
%	&\left(a_1 + \theta - 1\right) \, \left(b_1 + \theta \right) \, \mathcal{F}^{(2)}_3 (a_1 - 1) - b_1 \, (a_1 - 1)  \, \mathcal{F}^{(2)}_3 (b_1 + 1) = 0;\\
%	&	\left(a_1 + \theta - 1\right) \, \left(b_2 + \phi\right) \, \mathcal{F}^{(2)}_3 (a_1 - 1) - b_2 \, (a_1 - 1)  \, \mathcal{F}^{(2)}_3 (b_2 + 1) = 0;\\
	&	\left(a_1 + \theta - 1\right) \, \left(c + \frac{1}{k} \Theta_{t} - 1\right) \, \mathcal{F}^{(2)}_3 (a_1 - 1)  - (c - 1) \, (a_1 - 1)  \, \mathcal{F}^{(2)}_3 (c - 1) = 0;\\
%	& (b_1 - 1) \,	\left(a_1 + \theta - 1\right) \,  \mathcal{F}^{(2)}_3 (a_1 - 1)  -  (a_1 - 1)  \, \left(b_1 + \theta  - 1\right) \mathcal{F}^{(2)}_3 (b_1 - 1) = 0;\\
%	& (b_2 - 1) \,	\left(a_1 + \theta - 1\right) \,  \mathcal{F}^{(2)}_3 (a_1 - 1)  -  (a_1 - 1)  \, \left(b_2 + \phi - 1\right) \mathcal{F}^{(2)}_3 (b_2 - 1) = 0;\\
	& c \,	\left(a_1 + \theta - 1\right) \,  \mathcal{F}^{(2)}_3 (a_1 - 1) -  (a_1 - 1)  \, \left(c + \frac{1}{k} \Theta_{t}\right) \mathcal{F}^{(2)}_3 (c + 1) = 0;\\
%	&	a_1 \left(a_2 + \phi\right) \,  \mathcal{F}^{(2)}_3 (a_1 + 1)  - a_2 \, \left(a_1 + \theta\right)  \, \mathcal{F}^{(2)}_3 (a_2 + 1) = 0;\\
%	& a_1 (a_2 - 1) \,  \mathcal{F}^{(2)}_3 (a_1 + 1) -  \left(a_1 + \theta\right)  \, \left(a_2 + \phi  - 1\right) \mathcal{F}^{(2)}_3 (a_2 - 1) = 0;\\
%	& \left(a_1 + \theta - 1\right)  \, \left(a_2 + \phi\right) \,  \mathcal{F}^{(2)}_3 (a_1 - 1) -  a_2  \, (a_1 - 1) \mathcal{F}^{(2)}_3 (a_2 + 1) = 0;\\
%	& a_2 \,	\left(a_1 + \theta - 1\right) \,  \mathcal{F}^{(2)}_3 (a_1 - 1) -  (a_1 - 1)  \, \left(a_2 + \phi - 1\right) \mathcal{F}^{(2)}_3 (a_2 - 1) = 0;\\	
%	& a_2 \, (a_2 - 1) \, \mathcal{F}^{(2)}_3 (a_2 + 1) - \left(a_2 + \phi\right) \, \left(a_2 + \phi  - 1\right) \mathcal{F}^{(2)}_3 (a_2 - 1) = 0;\\
%	& a_2 \, (b_1 - 1) \, \mathcal{F}^{(2)}_3 (a_2 + 1)  - \left(a_2 + \phi \right) \, \left(b_1 + \theta  - 1\right) \mathcal{F}^{(2)}_3 (b_1 - 1) = 0;\\
%	& a_2 \, (b_2 - 1) \, \mathcal{F}^{(2)}_3 (a_2 + 1) - \left(a_2 + \phi\right) \, \left(b_2 + \phi - 1\right) \mathcal{F}^{(2)}_3 (b_2 - 1) = 0;\\
	& a_2 \, c \, \mathcal{F}^{(2)}_3 (a_2 + 1)  - \left(a_2 + \phi\right) \, \left(c + \frac{1}{k} \Theta_{t} \right) \mathcal{F}^{(2)}_3 (c + 1) = 0;\\
%	& a_2 \, \left(b_1 + \theta\right) \, \mathcal{F}^{(2)}_3 (a_2 + 1) - b_1 \, \left(a_2 + \phi\right) \, \mathcal{F}^{(2)}_3 (b_1 + 1) = 0;\\
%	& a_2 \, \left(b_2 + \phi\right) \, \mathcal{F}^{(2)}_3 (a_2 + 1) - b_2 \, \left(a_2 + \phi\right) \, \mathcal{F}^{(2)}_3 (b_2 + 1) = 0;\\
	& a_2 \, \left(c + \frac{1}{k} \Theta_{t} - 1\right) \, \mathcal{F}^{(2)}_3 (a_2 + 1)  - (c - 1) \, \left(a_2 + \phi\right) \, \mathcal{F}^{(2)}_3 (c - 1) = 0;\\
%	&\left(a_2 + \phi - 1\right) \, \left(b_1 + \theta \right) \, \mathcal{F}^{(2)}_3 (a_2 - 1) - b_1 \, (a_2 - 1)  \, \mathcal{F}^{(2)}_3 (b_1 + 1) = 0;\\
%	&	\left(a_2 + \phi - 1\right) \, \left(b_2 + \phi\right) \, \mathcal{F}^{(2)}_3 (a_2 - 1) - b_2 \, (a_2 - 1)  \, \mathcal{F}^{(2)}_3 (b_2 + 1) = 0;\\
	&	\left(a_2 + \phi - 1\right) \, \left(c + \frac{1}{k} \Theta_{t} - 1\right) \, \mathcal{F}^{(2)}_3 (a_2 - 1)  - (c - 1) \, (a_2 - 1)  \, \mathcal{F}^{(2)}_3 (c - 1) = 0;\\
%	& (b_1 - 1) \,	\left(a_2 + \phi - 1\right) \,  \mathcal{F}^{(2)}_3 (a_2 - 1)  -  (a_2 - 1)  \, \left(b_1 + \theta  - 1\right) \mathcal{F}^{(2)}_3 (b_1 - 1) = 0;\\
%	& (b_2 - 1) \,	\left(a_2 + \phi - 1\right) \,  \mathcal{F}^{(2)}_3 (a_2 - 1)  -  (a_2 - 1)  \, \left(b_2 + \phi - 1\right) \mathcal{F}^{(2)}_3 (b_2 - 1) = 0;\\
	& c \,	\left(a_2 + \phi - 1\right) \,  \mathcal{F}^{(2)}_3 (a_2 - 1) -  (a_2 - 1)  \, \left(c + \frac{1}{k} \Theta_{t}\right) \mathcal{F}^{(2)}_3 (c + 1) = 0;\\
%	& b_1 \,	(b_1 - 1) \,  \mathcal{F}^{(2)}_3 (b_1 + 1) -  \left(b_1 + \theta \right)  \, \left(b_1 + \theta  - 1\right) \mathcal{F}^{(2)}_3 (b_1 - 1) = 0;\\
%	& b_1 \,	\left(b_2 + \phi\right) \,  \mathcal{F}^{(2)}_3 (b_1 + 1) -  b_2  \, \left(b_1 + \theta\right) \mathcal{F}^{(2)}_3 (b_2 + 1) = 0;\\
%	& b_1 \,	(b_2 - 1) \,  \mathcal{F}^{(2)}_3 (b_1 + 1) -  \left(b_1 + \theta\right)  \, \left(b_2 + \phi - 1\right) \mathcal{F}^{(2)}_3 (b_2 - 1) = 0;\\
	& b_1 \,	\left(c + \frac{1}{k} \Theta_{t} -  1\right) \,  \mathcal{F}^{(2)}_3 (b_1 + 1) -  (c - 1)  \, \left(b_1 + \theta \right) \mathcal{F}^{(2)}_3 (c - 1) = 0;\\
	& b_1 \,	c \,  \mathcal{F}^{(2)}_3 (b_1 + 1) -  \left(c + \frac{1}{k} \Theta_{t}\right)  \, \left(b_1 + \theta\right) \mathcal{F}^{(2)}_3 (c + 1) = 0;\\
%	& b_2 \,	(b_1 - 1) \,  \mathcal{F}^{(2)}_3 (b_2 + 1) -  \left(b_2 + \phi\right)  \, \left(b_1 + \theta  - 1\right) \mathcal{F}^{(2)}_3 (b_1 - 1) = 0;\\
%	& b_2 \,	(b_2 - 1) \,  \mathcal{F}^{(2)}_3 (b_2 + 1) -  \left(b_2 + \phi\right)  \, \left(b_2 + \phi  - 1\right) \mathcal{F}^{(2)}_3 (b_2 - 1) = 0;\\
	& b_2 \,	\left(c + \frac{1}{k} \Theta_{t} - 1\right) \,  \mathcal{F}^{(2)}_3 (b_2 + 1) -  (c - 1)  \, \left(b_2 + \phi\right) \mathcal{F}^{(2)}_3 (c - 1) = 0;\\
	& b_2 \,	c \,  \mathcal{F}^{(2)}_3 (b_2 + 1) -  \left(c + \frac{1}{k} \Theta_{t}\right)  \, \left(b_2 + \phi\right) \mathcal{F}^{(2)}_3 (c + 1) = 0;\\
%	& (b_2 - 1) \,	\left(b_1 + \theta  - 1\right) \,  \mathcal{F}^{(2)}_3 (b_1 - 1) -  (b_1 - 1)  \, \left(b_2 + \phi - 1\right) \mathcal{F}^{(2)}_3 (b_2 - 1) = 0;\\
	& \left(b_1 + \theta - 1\right) \,	\left(c + \frac{1}{k} \Theta_{t} - 1\right) \,  \mathcal{F}^{(2)}_3 (b_1 - 1) -  (c - 1)  \, (b_1 - 1) \mathcal{F}^{(2)}_3 (c - 1) = 0;\\
	& 	c \, \left(b_1 + \theta - 1\right) \, \mathcal{F}^{(2)}_3 (b_1 - 1) - (b_1 - 1)  \left(c + \frac{1}{k} \Theta_{t}\right)  \,  \mathcal{F}^{(2)}_3 (c + 1) = 0;\\
	& \left(b_2 + \phi - 1\right) \,	\left(c + \frac{1}{k} \Theta_{t} - 1\right) \,  \mathcal{F}^{(2)}_3 (b_2 - 1) -  (c - 1)  \, (b_2 - 1) \mathcal{F}^{(2)}_3 (c - 1) = 0;\\
	& 	c \, \left(b_2 + \phi - 1\right) \, \mathcal{F}^{(2)}_3 (b_2 - 1) - (b_2 - 1)  \left(c + \frac{1}{k} \Theta_{t}\right)  \,  \mathcal{F}^{(2)}_3 (c + 1) = 0;\\
	& 	c \, (c - 1) \, \mathcal{F}^{(2)}_3 (c - 1) - \left(c + \frac{1}{k} \Theta_{t} - 1\right)  \left(c + \frac{1}{k} \Theta_{t}\right)  \,  \mathcal{F}^{(2)}_3 (c + 1) = 0.
\end{align}
\section{Conclusion}
The present paper is based on the study of two distinct discrete forms of Appell function $F_3$. We obtain their regions of convergence, difference-differential relations, integral representations, finite and infinite summation formulas, recursion relations etc. We also introduced the discrete analogues of Humbert functions $\varXi_1$ and $\varXi_2$ as limiting cases of discrete Appell functions $\mathcal{F}^{(1)}_3$ and $\mathcal{F}^{(2)}_3$.  
The particular cases of the results and identities obtained in this paper lead to the corresponding results for Appell function $F_3$ and Kamp\'e de F\'eriet  function. A few of them are already available in the literature but some are believed to be new.

% The study of other discrete forms of Appell function $F_3$ and Humbert functions $\varXi_1$ and $\varXi_2$ is the open research problem.  

%\medskip
%{\bf Acknowledgments:} The authors thank the referee for valuable suggestions that led to a better presentation of the paper. The financial assistance provided to the first author in the form of a Junior Research Fellowship from Council of Scientific and Industrial Research, India is gratefully acknowledged.

\end{document}